\documentclass{amsart}
\usepackage[paper,pkg/amsaddr=true,macros/file=false]{zbs}
\title{Schur complements for tensors and multilinear commutative rank}
\author[Guy Moshkovitz]{Guy Moshkovitz\nfts{1}}
\address{\nfts{1}Department of Mathematics, City University of New York (Baruch College \& Graduate Center), New York, NY 10010, USA}
\email{guymoshkov@gmail.com}
\author[Daniel G. Zhu]{Daniel G. Zhu\nfts{2}}
\address{\nfts{2}Department of Mathematics, Princeton University, Princeton, NJ 08544, USA}
\email{zhd@princeton.edu}

\thanks{The first author is supported by NSF Award DMS-2302988. The second author is supported by the NSF Graduate Research Fellowships Program (NSF grant DGE-2039656).}

\newcommand{\mat}[1]{{#1}_\square}

\DeclareMathOperator{\bias}{bias}
\DeclareMathOperator{\mindeg}{mindeg}
\DeclareMathOperator{\mult}{mult}
\DeclareMathOperator{\rank}{rank}
\DeclareMathOperator{\PR}{PR}
\DeclareMathOperator{\CR}{CR}
\DeclareMathOperator{\LR}{LR}
\DeclareMathOperator{\AR}{AR}
\DeclareMathOperator{\SR}{SR}
\DeclareMathOperator{\MaxR}{MR}

\DeclareMathOperator{\codim}{codim}

\newcommand{\setk}{\mathbb{K}}

\newcommand{\calr}{\mathcal{R}}
\newcommand{\cals}{\mathcal{S}}
\newcommand{\calm}{\mathcal{M}}

\newcommand{\frakp}{\mathfrak{p}}
\newcommand{\frakq}{\mathfrak{q}}
\newcommand{\fraka}{\mathfrak{a}}

\newcommand{\vp}{{\bm{p}}}
\newcommand{\vx}{{\bm{x}}}
\newcommand{\vy}{{\bm{y}}}
\newcommand{\vz}{{\bm{z}}}

\numberwithin{equation}{section}

\begin{document}
\begin{abstract}
We show that three notions of rank for matrices of multilinear forms are equivalent. This result generalizes a classical result of Flanders, corrects a minor hole in work of Fortin and Reutenauer, answers a question of Lampert on the relation between the analytic and slice ranks of trilinear forms, and establishes a special case of the conjecture that the analytic and partition ranks of a tensor are equivalent.
\end{abstract}
\maketitle

\section{Introduction}
Fix any field\footnote{While some parts of this paper assume $\setf$ is finite, most results apply over all fields. This is in contrast to some other results (e.g.\ in~\cite{CM22,Der22,CM23,CY25}) which assume that $\setf$ is perfect.} $\setf$ 
and an integer $d \geq 0$. Let $x_1 = (x_{1,1},\ldots,x_{1,n})$ through $x_d = (x_{d,1},\ldots,x_{d,n})$ be $d$ collections of variables, where we let $n$ be an unbounded parameter. Furthermore, let $\calm_d$ denote the space of multilinear forms over $\setf$ that are individually linear in $x_1,\ldots,x_d$, i.e.\ polynomials of the form
\[\sum_{i_1,\ldots,i_d = 1}^n a_{i_1,\ldots,i_d} x_{1,i_1} \cdots x_{d, i_d},\]
where $a_{i_1,\ldots,i_d}$ are coefficients in $\setf$.

Suppose $M(x_1,\ldots,x_d)$ is an $\calm_d$-matrix, where for the rest of this paper we define an $L$-matrix to be a matrix with entries in $L$.
We consider three ways to define the rank of $M$:
\begin{itemize}
    \item the \vocab{max-rank} $\MaxR(M)$, defined to the maximum possible rank of the evaluation $M(x_1,\ldots,x_d)$ over all choices of $x_1,\ldots,x_d \in \setf^n$.
    \item the \vocab{commutative rank} $\CR(M)$, defined to be the rank of $M$ when viewed as a matrix over the field of rational functions in the variables $x_{i,j}$. Equivalently, this is the maximum $r$ for which an $r \times r$ minor of $M$ is a nonzero polynomial, or the maximum rank of the evaluation $M(x_1,\ldots,x_d)$ over all choices of $x_1,\ldots,x_d \in \overline{\setf}^n$. ($\overline{\setf}$ can be replaced with any infinite extension of $\setf$.)
    \item the \vocab{partition rank} $\PR(M)$, defined to be the minimum number of multilinear outer products needed to sum to $M$. Specifically, given a set $S \subseteq [d] \coloneq \set{1,2,\ldots,d}$ and multilinear vectors $u((x_i)_{i \in S})$ and $v((x_i)_{i \in [d] \setminus S})$, the outer product $u \otimes v$ is an $\calm_d$-matrix. The partition rank is the minimum $r$ such that $M$ is the sum of $r$ such matrices. (The choice of $S$ can vary between the matrices in the sum.)
\end{itemize}
The case $d = 0$ corresponds to the case of matrices of scalars, and it is an elementary fact of linear algebra that in this case these ranks are all equal to the standard notion of rank. For general $d$, it is easy to see that 
\[\MaxR(M) \leq \CR(M) \leq \PR(M),\]
but these notions are all distinct. For example, if $d = 1$, the matrix
\[\begin{pmatrix}
\alpha & 0 & 0 \\
0 & \beta & 0 \\
0 & 0 & \alpha+\beta
\end{pmatrix}\]
has max-rank $2$ but commutative rank $3$ over the field $\setf_2$, while the matrix
\[\begin{pmatrix}
0 & \alpha & \beta \\
-\alpha & 0 & \gamma \\
-\beta & -\gamma & 0
\end{pmatrix}\]
has commutative rank $2$ (as its determinant is zero)
but partition rank $3$. (Here $\alpha,\beta,\gamma$ are shorthands for $x_{1,1}, x_{1,2}, x_{1,3}$, respectively.)

In this paper, we show that these three notions of rank are equivalent up to constant factors:
\begin{thm} \label{thm:main}
We have
\[\PR(M) \leq (2^d + O_d(\abs{\setf}\inv)) \CR(M) \quad \text{and} \quad \CR(M) \leq \paren*{1 - \frac{1}{\abs{\setf}}}^{-d} \MaxR(M).\]
Here $1/\abs{\setf}$ should be interpreted as zero if $\setf$ is infinite.
\end{thm}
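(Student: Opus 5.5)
The second inequality and the first need different ideas, so I treat them separately.

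\emph{Second inequality.} I would prove $\CR(M) \le (1-1/\abs{\setf})^{-d}\MaxR(M)$ by induction on $d$, fixing one block of variables at a time. Let $r = \CR(M)$ and pick an $r\times r$ minor $D(x_1,\ldots,x_d)$ that is a nonzero polynomial.

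Write $D$ as a polynomial in $x_d$ whose coefficients are polynomials in $x_1,\ldots,x_{d-1}$. Each entry is linear in $x_d$, so $D$ has degree $r$ in $x_d$.

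Instead of applying Schwartz--Zippel to $D$ directly, which only gives a weak bound, I would argue via a submatrix. Specialize $x_d$ to a random $a\in\setf^n$ and consider the rank over $\setf(x_1,\ldots,x_{d-1})$ of the $\calm_{d-1}$-matrix $M(\cdot,a)$.

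The key one-variable claim is this: if $N(y)$ is a matrix linear in $y\in\setf^n$ with entries over a field $K\supseteq\setf$, and $\rank_{K(y)} N = r$, then there is $a\in\setf^n$ with $\rank N(a) \ge (1-1/\abs{\setf})r$. This is the $d=1$ case with coefficients in a field.

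To prove the claim, I would take a uniformly random $a$. Write $N(y)=\sum_j y_j N_j$ and use a Flanders/Meshulam-type averaging. One option is to show $\mathbb{E}\,\rank N(a) \ge (1-1/q)r$ by restricting to an $r\times r$ block and expanding in a suitable basis. Alternatively, choose a generic direction and argue that each of the $r$ "pivots" survives with probability at least $1-1/q$.

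Iterating the claim $d$ times, with $K=\setf(x_1,\ldots,x_{i-1})$ at each stage, yields the factor $(1-1/q)^{-d}$. I expect this one-variable claim to be the main technical point of this half.

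\emph{First inequality.} The title suggests a Schur complement argument, and I would induct on $r=\CR(M)$, peeling off one rank at a time. Choose a single entry, or more generally a $1\times1$ "pivot", that is a nonzero form $m_{11}(x)$. Over the rational function field, $M$ has Schur complement
\[M' = M_{22} - M_{21}m_{11}^{-1}M_{12},\]
of rank $r-1$.

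The difficulty is that $M'$ is not multilinear. To fix this, I would define a tensor analogue of the Schur complement. Because $m_{11}$ is multilinear, I would specialize one variable block $x_i$ to a fixed value so that $m_{11}$ becomes a nonzero form in the remaining blocks. Then, for each subset $S$ of blocks, I would construct outer-product corrections $u\otimes v$ that kill the first row and column. This costs at most $2^d$ partition-rank terms per unit of commutative rank, and the remainder should have commutative rank at most $r-1$.

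Concretely, I would aim for a lemma of the following form: if $\CR(M)=r$, there exist a matrix $M'$ with $\CR(M')\le r-1$ and $\PR(M-M')\le 2^d+O_d(1/q)$, or perhaps the bound holds only on average over many steps. The plan is to set up a recursion on $d$ in which a degree-$d$ pivot splits into pieces indexed by subsets of $[d]$, giving the factor $2^d$. Over a finite field, the $O_d(1/q)$ loss would come from failing to find good evaluation points, controlled using the second inequality.

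The main obstacle is defining this multilinear Schur complement so that the commutative rank provably drops. I would first attempt $d=1$, which is Flanders-like, and then generalize by treating $\calm_d$-matrices as $\calm_{d-1}$-matrices over linear forms in $x_d$.
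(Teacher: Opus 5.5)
Both halves have genuine gaps. For the second inequality, your reduction to a one-block claim with coefficients in $K=\setf(x_1,\dots,x_{i-1})$ is the same reduction the paper uses (\cref{lem:ercrint}, iterated in \cref{lem:ercrnew}). However, the claim itself is not proved, and you correctly flag it as the whole difficulty. \emph{Flanders/Meshulam-type averaging} and \emph{each pivot survives with probability $1-1/q$} are not arguments, and you set Schwartz--Zippel aside as too weak.

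The missing idea is the multiplicity version of Schwartz--Zippel (\cref{lem:multSZ}): a nonzero $f\in\setf[y]$ of degree $r$ satisfies $\sum_{a\in\setf^n}\mult(f,a)\le r q^{n-1}$. To use it, take an $r\times r$ submatrix of $N$ with nonzero determinant. Let $f$ be a nonzero $\setf$-coefficient of that determinant, for instance the coefficient of a fixed monomial in the other blocks; it has degree $r$ in $y$. Expand $\det N(y+a)$ by multilinearity in the rows. Every term using more than $\rank N(a)$ rows of $N(a)$ vanishes, so $\mult(f,a)\ge r-\rank N(a)$. Averaging over $a$ gives $\sete_a\rank N(a)\ge(1-1/q)r$.

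For the first inequality there is no construction. The one you sketch, peeling off a $1\times 1$ pivot at cost $2^d$ while $\CR$ drops by one, fails when implemented naturally. Suppose you form $M/A$ with $A=(m_{11})$ in the local ring $\calr_{d,\vp}$ at a point with $m_{11}(\vp)\ne 0$ and multilinearize. The remainder's rank is then only bounded by $\sum_{S\subseteq[d]}(\rank M[\vp_S]-1)$. \cref{ex:tight} shows this can exceed $\CR(M)-1$: there $\CR(M)=2$ but the remainder has rank $4$.

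The paper's fix is to pivot on a block as large as possible:
\begin{itemize}
\item Choose $\vp$ with $\rank M(\vp)=\MaxR(M)=r$, and let $A$ be an $r\times r$ submatrix with $A(\vp)$ invertible.
\item Form $M/A$ over $\calr_{d,\vp}$, and replace each entry by its $d$-fold mixed directional derivative at $\vp$. This is the unique multilinear form congruent to the entry modulo $\sum_i\frakp_i^2$.
\item Prove $\PR(M-[M/A]_\vp)\le 2^d r$: each approximated outer product splits into $2^d$ pieces indexed by $S\subseteq[d]$.
\item Prove $\CR([M/A]_\vp)\le\sum_{S\subsetneq[d]}(\rank M[\vp_S]-r)\le(2^d-1)(k-r)$, again by a multiplicity argument.
\end{itemize}
This is a contraction only because $r\ge(1-1/q)^d k$ by the second inequality. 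So your instinct that the second inequality controls the loss is right, but only in this block form. The recursion $\PR(M)\le 2^d r+C(2^d-1)(k-r)$ then closes with $C=2^d+O_d(1/q)$. Over infinite fields $r=k$ and the remainder vanishes in one step.

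For small $q$ the contraction fails altogether. You then need a separate step, such as passing to $\setk=\setf_{q^k}$ and using $\PR(M^{\setf})\le k\,\PR(M^{\setk})$. Your proposal does not address this case.
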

The constant $(1-1/\abs{\setf})^{-d}$ is tight (see \cref{prop:tight}). It is known that the constant $2^d$ is tight for $d \leq 1$ \cite{DM18}, but this is unknown for higher $d$.

\subsection{Linear matrices}
In the case $d = 1$, the ranks we consider reduce to a number of foundational notions. The problem of understanding linear matrices of low max-rank is equivalent to understanding vector spaces of low-rank matrices;
indeed, evaluating a linear matrix at all points gives a vector space of matrices, and vice versa. 
Linear matrices of bounded max-rank have been the subject of a long line of work (see e.g.~\cite{EH88} and references therein), going back to work of Dieudonn\'e \cite{Die49}. Although an exact classification of such spaces appears to be intractable \cite[Sec.~4]{EH88}, a basic result in the area is the following result of Flanders:
\begin{thm}[Flanders \cite{Fla62}] \label{thm:flanders}
Let $M$ be an $\calm_1$-matrix and suppose $\abs{\setf} > \MaxR(M) = r$. Then there exist invertible scalar matrices $P$ and $Q$ such that
\[PMQ = \begin{pmatrix}
M_{11} & M_{12} \\
M_{21} & 0
\end{pmatrix}\]
where $M_{11}$ is of size $r \times r$.
\end{thm}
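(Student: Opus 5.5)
Fix a point $a \in \setf^n$ at which the evaluation $M(a) = A_0$ has rank $r = \MaxR(M)$. After multiplying by invertible scalar matrices $P$ and $Q$ we may assume
\[A_0 = \begin{pmatrix} I_r & 0 \\ 0 & 0 \end{pmatrix}.\]
It then suffices to show that every entry of the lower-right block of $M$ vanishes identically. So write $M = \begin{pmatrix} M_{11} & M_{12} \\ M_{21} & M_{22} \end{pmatrix}$ and suppose some entry $m_{ij}$ of $M_{22}$, with $i,j > r$, is a nonzero linear form. I will pass to the $(r+1)\times(r+1)$ submatrix $N$ on rows $1,\dots,r,i$ and columns $1,\dots,r,j$. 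Its determinant, evaluated at $a + t y$ for a scalar $t$ and any $y \in \setf^n$, is a polynomial in $t$ of degree at most $r+1$.

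Next I compute this polynomial. Write $B = M(y)$, so that $N(a+ty) = N_0 + tN_B$, where $N_0 = \operatorname{diag}(I_r, 0)$. Expanding the determinant, the coefficient of $t^1$ is $B_{ij}$. Indeed, the terms of degree one choose exactly one entry from $tN_B$, and a nonzero contribution requires that entry to be the $(i,j)$ entry, with the rest coming from $I_r$. Since $m_{ij}$ is a nonzero linear form, we can pick $y$ with $B_{ij} = m_{ij}(y) \neq 0$.

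Then $f(t) = \det N(a+ty)$ is a nonzero polynomial with $f(0) = 0$ and $\deg f \leq r+1$. Hence $f(t) = t\,g(t)$, where $g(0) = B_{ij} \neq 0$ and $\deg g \leq r$. Such a $g$ has at most $r$ roots, and $0$ is not among them. So there are at most $r$ nonzero roots, which means $g$ vanishes at no more than $r$ elements of $\setf\setminus\{0\}$. Because $\abs{\setf} > r$, we have $\abs{\setf \setminus\{0\}} \geq r$. This needs care: if $\abs{\setf} = r+1$, then all $r$ nonzero elements could be roots.

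The main obstacle is exactly this counting. To get around it, I will use the full point $a + ty$ together with a variation of $a$. Alternatively, I can replace $t$ by a homogeneous pair $sa + ty$, for which the degree-$(r+1)$ form $F(s,t) = \det N(sa+ty)$ is nonzero. Using this form, I plan to argue as follows. Since $F(s,t) = s^r t\,B_{ij} + (\text{terms divisible by } t^2)$, we get $F = t\,G(s,t)$ with $G$ homogeneous of degree $r$ and $G(1,0) = B_{ij} \neq 0$. The points $[s:t]$ of $\mathbb{P}^1(\setf)$ with $t \neq 0$ are parametrized by $s \in \setf$; there are $\abs{\setf} \geq r+1$ of them. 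Dehomogenizing, $G(s,1)$ is a polynomial in $s$ of degree exactly $r$, because its leading coefficient is $G(1,0) \neq 0$. So it has at most $r$ roots, and some $s \in \setf$ satisfies $G(s,1) \neq 0$. At the point $sa + y$, the $(r+1)$-minor is then nonzero. This gives $\rank M(sa+y) \geq r+1$, contradicting $\MaxR(M) = r$. Therefore $M_{22} = 0$, as claimed.
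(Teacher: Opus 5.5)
Your proof is correct. The paper gives no proof of \cref{thm:flanders}; it is quoted from \cite{Fla62}.

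The nearest argument in the paper is the proof of \cref{thm:d1bound}, and it rests on the same computation as yours. One normalizes so that the evaluation at a rank-$r$ point is $\begin{pmatrix} I_r & 0 \\ 0 & 0 \end{pmatrix}$, then reads off the top coefficient, in the direction of that point, of a bordered minor. The paper's observation that the $\alpha_1^r$-coefficient of $\det M'$ is $\det D'$ becomes, with $D'=(m_{ij})$ of size $1\times 1$, your observation that the $s^r$-coefficient of $\det N(sa+y)$ is $m_{ij}(y)$.

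The two arguments differ in what must then be done with this polynomial. The paper works with commutative rank, so it only needs the polynomial to be nonzero. This is why \cref{thm:d1bound} holds over every field and allows a block $D'$ of any size. You work with max-rank, so you need an $\setf$-point where the polynomial does not vanish, and this is exactly where $\abs{\setf}>r$ is used.

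Putting the scalar parameter on $a$ rather than on $y$ is the right choice. Then $s\mapsto\det N(sa+y)$ has degree exactly $r$ with leading coefficient $m_{ij}(y)\neq 0$, so it cannot vanish at all $\abs{\setf}\ge r+1$ values of $s$. Along the line $t\mapsto a+ty$, by contrast, the root $t=0$ is wasted, and that version genuinely fails when $\abs{\setf}=r+1$. In a final write-up, drop that false start and the projective language, which adds nothing.

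There is also a two-step alternative that stays inside the paper's framework:
\begin{itemize}
\item For $\abs{\setf}>r$, every $(r+1)$-minor of $M$ is homogeneous of degree $r+1\le\abs{\setf}$ and vanishes on $\setf^n$. Hence it is identically zero (reduce modulo $x_{1,j}^{\abs{\setf}}-x_{1,j}$ when $\setf$ is finite), so $\CR(M)=\MaxR(M)=r$.
\item The argument of \cref{thm:d1bound} then gives $\CR(D)\le \CR(M)-r=0$.
\end{itemize}
Your restriction to the single line $\{sa+y : s\in\setf\}$ avoids that global step.
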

Following Eisenbud and Harris \cite{EH88} (see also \cite{Atk81}), if we may find invertible $P$ and $Q$ such that
\[PMQ = \begin{pmatrix}
M_{11} & M_{12} \\
M_{21} & 0
\end{pmatrix}\]
with $M_{11}$ of size $r_1 \times r_2$, we call $M$ an \vocab{$(r_1+r_2)$-compression space}; observe that every $r$-compression space has commutative rank and hence max-rank at most $r$. It is straightforward to show that $\PR(M)$ is the smallest value of $r$ for which $M$ is an $r$-compression space, so \cref{thm:flanders} implies that over large fields, we have $\PR(M) \leq 2\MaxR(M)$, essentially the statement of \cref{thm:main} for $d = 1$. The constant $2$ has since been shown to be tight \cite{DM18}.

In part since most of the early work on spaces of low-rank matrices (with the notable exception of \cite{Mes85}) worked over fields large enough that the max-rank and commutative rank coincide, the concept of commutative rank as an inherently polynomial property developed largely independently in the context of algebraic complexity theory, where the problem of computing the commutative rank is a classical problem of Edmonds \cite{Edm67}. The term ``commutative rank'' itself is significantly newer, originating from work of Fortin and Reutenauer \cite{FR04} comparing it to the \vocab{noncommutative rank} (first defined in \cite{Ber67}, see also \cite{Coh71,Coh95}), where the $x_{1,i}$ are treated as free noncommuting variables. For matrices $M$ of linear forms, Fortin and Reutenauer showed that $M$ having noncommutative rank at most $r$ is equivalent to $M$ being an $r$-compression space, i.e.\ the noncommutative rank and the partition rank are the same. Applying \cref{thm:flanders}, Fortin and Reutenauer concluded that the noncommutative rank is at most twice the commutative rank, though they seemingly neglected to mention the field size restriction in \cref{thm:flanders}. Our techniques prove this unconditionally:
\begin{cor} \label{thm:d1bound}
For an $\calm_1$-matrix $M$, we have $\PR(M) \leq 2\CR(M)$.
\end{cor}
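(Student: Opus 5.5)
One way to prove \cref{thm:d1bound} is to reduce to Flanders' theorem over a large extension field, and then bring the compression structure back down to $\setf$.

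Let $r = \CR(M)$. Choose a finite or infinite extension $\setk \supseteq \setf$ with $\abs{\setk} > r$. Since $\CR$ is the rank over the rational function field, it does not change under field extension, so $\MaxR_{\setk}(M) = r$ once $\setk$ is large enough; taking $\setk$ infinite makes this immediate. Applying \cref{thm:flanders} over $\setk$ gives invertible $P,Q$ over $\setk$ such that $PMQ$ has a zero block of size $(m-r)\times(n'-r)$. Equivalently, $M$ is a $2r$-compression space over $\setk$, so $\PR_{\setk}(M) \le 2r$.

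The main obstacle is descending this to $\setf$, i.e.\ showing $\PR_{\setf}(M) = \PR_{\setk}(M)$ for linear matrices. I would argue directly from the compression description. $M$ is an $s$-compression space over a field $L$ exactly when there are subspaces $U\subseteq L^m$ of codimension $a$ and $V \subseteq L^{n'}$ of codimension $b$, with $a+b=s$, such that $u^\top M v = 0$ for all $u\in U$, $v\in V$. The row space of the first block is what gets killed here. Write $M = \sum_k x_{1,k} A_k$ with $A_k$ scalar matrices over $\setf$. The condition becomes $U^\top A_k V = 0$ for all $k$, which is a system of equations defined over $\setf$. This is the statement that the scalar matrix space $\mathcal{A} = \operatorname{span}\{A_k\}$ admits such a pair.

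The key step is to take canonical choices. Fix $a$, and among pairs with codimension of $U$ equal to $a$, choose $V$ maximal: $V = \{v : U^\top A_k v = 0\ \forall k\}$. Flanders' conclusion gives some $U$ that works, but $U$ itself need not be Galois-stable. Instead I would invoke the Fortin--Reutenauer characterization mentioned above, namely $\PR$ equals the noncommutative rank. Noncommutative rank can be computed as $\min_U(\dim\mathcal{A}U\text{-type quantity})$: explicitly, $\PR(M) = \min_{W} \bigl(m - \dim W + \dim \sum_k A_k^\top W\bigr)$ over subspaces $W\subseteq L^m$.

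This is a ``shrunk subspace'' minimum. The set of minimizers is closed under sum and intersection, by submodularity of $W\mapsto \dim\sum_k A_k^\top W - \dim W$. Hence there is a unique maximal minimizer. Over $\setk$ this minimizer is stable under all field automorphisms fixing $\setf$, so it is defined over $\setf$. For an infinite extension that is not Galois, I would take $\setk$ to be the algebraic closure, or a large finite extension of $\setf$, which is Galois. By Galois descent of subspaces, it follows that the minimum over $\setf$ equals the minimum over $\setk$. Therefore $\PR_{\setf}(M) = \PR_{\setk}(M) \le 2r = 2\CR(M)$.

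The remaining points are routine. First, one checks the formula for $\PR$ via minimal $W$: a compression with $M_{11}$ of size $r_1\times r_2$ corresponds to $W$ being the last $m-r_1$ coordinates, with $\sum_k A_k^\top W$ contained in the first $r_2$ coordinates. Second, one checks the lattice property through submodularity, since $\dim(\mathcal{A}^\top(W_1+W_2)) + \dim(\mathcal{A}^\top(W_1\cap W_2)) \le \dim\mathcal{A}^\top W_1 + \dim\mathcal{A}^\top W_2$.
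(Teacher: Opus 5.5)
Your proof is correct, but it takes a genuinely different route from the paper.

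\textbf{The paper's route.} The paper never leaves $\setf$ and never uses Flanders. It inducts on $k=\CR(M)$ and normalizes some nonzero evaluation of rank $r$ to $M(e_1)=\begin{pmatrix} I & 0\\ 0 & 0\end{pmatrix}$. Then $M=\begin{pmatrix}\alpha_1 I+A & B\\ C & D\end{pmatrix}$ with $A,B,C,D$ free of $\alpha_1$. It shows $\CR(D)\le k-r$: for any submatrix $M'$ of $M$ containing the top-left $r\times r$ block, with $D'$ the corresponding submatrix of $D$, the $\alpha_1^r$-coefficient of $\det M'$ is exactly $\det D'$. Hence $\PR(M)\le 2r+\PR(D)\le 2k$. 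In effect, it replaces Flanders' conclusion $D=0$, which needs $r=\MaxR(M)$ and $\abs{\setf}>r$, by a field-independent bound on $\CR(D)$, and then iterates. This is the $d=1$ instance of the Schur-complement scheme behind \cref{lem:prcrl}.

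\textbf{Your route.} You apply Flanders once over a large extension $\setk$ and put all the work into descent. The ingredients are:
\begin{itemize}
\item the formula $\PR(M)=\min_W\bigl(m-\dim W+\dim\sum_k A_k^\top W\bigr)$, which is just the compression-space description, so the detour through Fortin--Reutenauer and noncommutative rank is unnecessary;
\item submodularity, which gives a unique maximal minimizer;
\item Galois invariance of that minimizer, plus descent of Galois-stable subspaces (e.g.\ via reduced row echelon bases).
\end{itemize}

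\textbf{What each approach buys.} Your argument proves something the paper does not: exact invariance $\PR_{\setf}(M)=\PR_{\setk}(M)$ for linear matrices, compared with the lossy factor $k$ in \cref{lem:ext}. The cost is that it depends on Flanders, and on a submodular lattice structure with no evident analogue for $d\ge 2$. The paper's argument is self-contained and is the template that generalizes.

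\textbf{One small correction.} Your remark about choosing $\setk$ is muddled: the algebraic closure of an imperfect infinite field is not Galois over it. This does not matter. When $\setf$ is infinite no extension is needed, since $\abs{\setf}>r$ automatically. When $\setf$ is finite, every algebraic extension of $\setf$ is Galois. So the argument goes through as intended.
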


\subsection{Applications to tensors}

An interesting application of \cref{thm:d1bound} is to the ranks of $3$-tensors. Independent work of Adiprasito, Kazhdan, and Ziegler \cite{AKZ21} and Cohen and the first author \cite{CM22} established the following result:
\begin{thm}[Adiprasito-Kazhdan-Ziegler \cite{AKZ21}\footnote{In \cite{AKZ21}, it is claimed that an exact constant of $3$ suffices, with no dependence on $q$. However, we do not see how this is the case and believe it to be an error, especially since \cite{CM22} uses essentially the same proof and does not get this constant.}, Cohen-Moshkovitz \cite{CM22}] \label{thm:arsr}
If $\AR(T)$ and $\SR(T)$ denote the analytic and slice ranks of a $3$-tensor $T$ over a finite field $\setf_q$ (for definitions see \cref{sec:prelim}), we have
\[\SR(T) \leq \paren[\big]{3 + O(\log(q)\inv)} \AR(T).\]
\end{thm}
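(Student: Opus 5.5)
We prove the bound $\SR(T) \leq (3 + O(\log(q)\inv))\AR(T)$ of \cref{thm:arsr} by reducing to a matrix of linear forms. View $T$ as a trilinear form $T(x,y,z)=\sum_{i,j,k} a_{ijk}x_iy_jz_k$. Fixing the $x$-variables gives the $\calm_1$-matrix $M(x)=(\sum_i a_{ijk}x_i)_{j,k}$. We use the standard bias formula
\[\bias(T)=\mathbb{E}_{x}\, q^{-\rank M(x)},\qquad \AR(T)=-\log_q \bias(T).\]
The plan is: relate $\AR(T)$ to a rank of $M$; apply the structural result for $M$; then convert the resulting compression-space structure into a slice decomposition of $T$.

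\textbf{Step 1: from analytic rank to a rank of $M$.} Let $r$ be a threshold, chosen later as roughly $(1+\epsilon)\AR(T)$. Bound $\bias(T)$ from below by $q^{-r}\Pr_x[\rank M(x)\le r]$. This shows that the set $\{x:\rank M(x)\le r\}$ has density at least $q^{-(\AR(T)-r)}$ in $\setf_q^n$. We need a subspace on which the max-rank is small. Note that $\{x : \rank M(x)\le r\}$ is cut out by the $(r+1)\times(r+1)$ minors, which are polynomials of degree $r+1$. Since the set has density only $q^{-O(r)}$, a Schwartz--Zippel-type argument alone is too weak. Instead, I would use the standard AKZ/CM approach: find a subspace $V$ of codimension at most $\AR(T)+O(r/\log q)$ on which $\CR(M|_V)\le r$. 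This should come from the restricted-polynomial argument already used in \cite{CM22}, which I take as a black box giving the same intermediate statement.

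\textbf{Step 2: apply \cref{thm:d1bound}.} Applying \cref{thm:d1bound} to $M|_V$ gives $\PR(M|_V)\le 2r$. By the compression-space description, this means that after changing bases in $y$ and $z$, $M|_V$ vanishes outside $r_1$ rows and $r_2$ columns, with $r_1+r_2\le 2r$. The point of using the unconditional \cref{thm:d1bound}, instead of Flanders' theorem, is that it avoids the field-size hypothesis $q>r$. That hypothesis fails for small $q$, which is exactly the gap in \cite{AKZ21}.

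\textbf{Step 3: convert to slice rank.} Write $x$-space as $V$ plus a complement of dimension $c=\codim V$. The part of $T$ coming from the complement is a sum of $c$ slices of the form $\ell(x)\cdot B(y,z)$. On $V$, $T$ is a sum of $r_1$ slices linear in $y$ and $r_2$ slices linear in $z$. Hence
\[\SR(T)\le c+2r\le 3\AR(T)+O(\AR(T)/\log q),\]
after taking $r\approx\AR(T)$ and absorbing the loss in $c$.

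\textbf{Main obstacle.} The hard part is Step 1: turning the density bound into a subspace of small codimension on which the commutative rank (not merely the max-rank over $\setf_q$) is at most $r$, with the loss of only $O(1/\log q)$ in the constant. My first attempt is to pass to the extension $\setf_{q^m}$ and average over $\setf_q$-points of subspaces. I would then use the second inequality of \cref{thm:main}, $\CR\le(1-1/q)^{-1}\MaxR$, applied on $V$, to trade max-rank for commutative rank at a cost of a factor $1+O(1/q)$. That cost is absorbed into the error term.
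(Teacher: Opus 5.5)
Your Steps 2 and 3 match the paper; the paper obtains this bound as \cref{cor:arsr}, via $\SR(T)\le\codim U+\PR(\mat{T}|_U)\le\codim U+2\CR(\mat{T}|_U)$ using \cref{thm:d1bound}. But Step 1 is a genuine gap, and it is where all the content of the proof lies.

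As written, Step 1 also contains an error. From $\bias(T)\ge q^{-r}\Pr_x[\rank M(x)\le r]$ you get the \emph{upper} bound $\Pr_x[\rank M(x)\le r]\le q^{r-\AR(T)}$, not a lower bound. The correct lower bound is $\Pr_x[\rank M(x)\le r]\ge q^{-\AR(T)}-q^{-r-1}$.

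You then take from \cite{CM22}, as a black box, a subspace $V$ with both $\codim V\le(1+o(1))\AR(T)$ and $\CR(M|_V)\le(1+o(1))\AR(T)$. You give no argument for this. The paper indicates that both \cite{AKZ21} and \cite{CM22} go through Derksen's invariant theory rather than through such a subspace statement. In any case, invoking \cite{CM22} for the crux of \cite{CM22}'s own theorem is circular.

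Your fallback, applying $\CR\le(1-1/q)^{-1}\MaxR$ on $V$, needs a subspace of codimension about $\AR(T)$ on which \emph{every} point has rank at most $r$. A set of density $q^{-\AR(T)}$ does not supply such a subspace, and you do not say where it would come from. A minor point: the Flanders field-size issue the paper raises concerns Fortin--Reutenauer. The paper's complaint about \cite{AKZ21} is their $q$-independent constant.

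The missing idea is that you need neither max-rank nor separate control of codimension and rank.

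First, \cref{lem:ercrint} with $d=1$ bounds commutative rank by \emph{average} rank: $\CR(M|_U)\le(1-1/q)^{-1}\sete_{x\in U}\rank M(x)$.

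Second, analytic rank directly produces a subspace with small average rank. Write $T(x,y,z)=B(x,y)\cdot z$ and let $Z=B^{-1}(0)$. Then one has exactly $\sete_{(x,y)\in Z}q^{\rank B(x,-)}=q^{\AR(T)}$. By Jensen, $\sete_Z\rank B(x,-)\le\AR(T)$, and likewise $\sete_Z\rank B(-,y)\le\AR(T)$. Choosing a good $y_0$ and setting $U=\ker B(-,y_0)$ gives $a\,\sete_{x\in U}\rank\mat{T}(x)+\codim U\le(a+1)\AR(T)$ for any $a\ge 0$; this is \cref{lem:ar-avg}.

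Since only the weighted sum $\codim U+2\CR(\mat{T}|_U)$ matters, take $a=2/(1-1/q)$. This yields $\SR(T)\le\bigl(3+\frac{2}{q-1}\bigr)\AR(T)$, which implies the stated $3+O(\log(q)^{-1})$ bound.
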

However, both proofs of \cref{thm:arsr} rely on work of Derksen \cite{Der22} in geometric invariant theory. Recently, Lampert \cite{Lam25} found an alternative elementary proof that $\SR(T) = O(\AR(T))$, showing that
\[\SR(T) \leq 5\AR(T) + O\paren[\big]{\log_{q}(\AR(T) + 1) + 1}.\]
(This bound can be boosted to $\SR(T) \leq 5\AR(T)$ by applying it to $T^{\oplus k}$ for large $k$ and using the fact that slice rank \cite{Gow21} and analytic rank add under direct sums; indeed, 
$\SR(T)/\AR(T) = \SR(T^{\oplus k})/\AR(T^{\oplus k}) \le 5 + o(1)$ as $k\to\infty$.) By using \cref{thm:d1bound}, we are able to recover the constant of $3$ using elementary methods, answering a question of Lampert \cite{Lam25} and improving the subleading term:
\begin{cor} \label{cor:arsr}
For any $3$-tensor $T$ over $\setf_q$,
\[\SR(T) \leq \paren*{3 + \frac{2}{q-1}} \AR(T).\]
\end{cor}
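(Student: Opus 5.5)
Write $T(x,y,z)=\sum_{i,j,k} T_{ijk}x_iy_jz_k$ for the trilinear form of $T$. The plan is to reduce to \cref{thm:d1bound} by viewing $T$ as a matrix of linear forms. Let $M(z)$ be the $\calm_1$-matrix with entries $M(z)_{ij}=\sum_k T_{ijk}z_k$, so that $T(x,y,z)=x^\top M(z)y$.

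\textbf{Step 1: relate $\PR(M)$ to $\SR(T)$.} A partition-rank decomposition of $M$ uses outer products $u\otimes v$ of two kinds. In the first kind, $u$ is a scalar vector and $v$ has linear entries in $z$. Such a term contributes $(x\cdot u)\,(\text{a bilinear form in } y,z)$ to $T$, which is an $x$-slice. The second kind is symmetric, with $u$ linear and $v$ scalar, and gives a $y$-slice. Hence $\SR(T)\le\PR(M)$, and \cref{thm:d1bound} gives $\SR(T)\le 2\CR(M)$. This alone is not enough: $\CR(M)$ could be large compared with $\AR(T)$. We therefore need to let $z$-slices absorb part of the rank.

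\textbf{Step 2: use the analytic rank.} Recall the standard formula
\[
q^{-\AR(T)}=\mathbb{E}_z\, q^{-\rank M(z)}.
\]
For each $r$, let $Z_r=\{z:\rank M(z)\le r\}$.

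First, choose a subspace $W$ of codimension $c$ inside the $z$-space on which $M$ restricted to $W$ has commutative rank $r$. Then decompose
\[
T = (\text{at most } c \text{ $z$-slices}) + T|_W,
\]
and bound $\SR(T|_W)\le 2r$ using Steps 1 and \cref{thm:d1bound}. This gives
\[
\SR(T)\le c+2r .
\]

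Second, we need to find $W$ with $c+2r\le(3+\tfrac{2}{q-1})\AR(T)$. The natural choice comes from the max-rank bound in \cref{thm:main} for $d=1$:
\[
\CR(M|_W)\le \tfrac{q}{q-1}\MaxR(M|_W).
\]
So it suffices to find a subspace $W$ on which $\rank M(z)\le r$ for every $z\in W$, with $c+2\cdot\tfrac{q}{q-1}r$ controlled. The bound $2\cdot\tfrac{q}{q-1}=2+\tfrac{2}{q-1}$ explains the target constant $3+\tfrac{2}{q-1}$, provided $c+2r\le 3\AR(T)$.

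\textbf{Step 3: find $W$ (the main obstacle).} From the formula in Step 2 we get $\Pr_z[\rank M(z)\le r]\ge q^{r-\AR(T)}$, so for $r\le\AR(T)$ the set $Z_r$ has density at least $q^{r-\AR}$.

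I would first try the choice $r=\AR(T)$ and $W=\{0\}$-free, arguing in the cleanest way via the dual formulation. Namely, $\AR$ also equals $-\log_q \Pr_{x,y}[T(x,y,\cdot)=0]$. With that, I would choose $r$ so that $Z_r$ is large, and then pass to a subspace by an elementary density-increment (or Chevalley–Warning-type) argument. The aim is a subspace $W$ inside $Z_r$, or on which $\MaxR$ is at most $r$, of codimension $c\le \AR(T)-r+$ (something accounting for the rest), arranged so that optimizing over $r$ yields $c+2r\le 3\AR(T)$. For instance, $r\le\AR$ and $c\le\AR$ suffice.

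Obtaining such a linear subspace from a merely dense set is the hard step. If a direct argument fails, I would apply the inequality to $T^{\oplus k}$, as the paper does for Lampert's bound, to absorb additive error terms. This is legitimate because slice rank and analytic rank are both additive under direct sums.
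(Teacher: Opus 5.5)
\textbf{Verdict: there is a genuine gap.} Steps 1--2 are sound and follow the same skeleton as the paper. Restricting $z$ to a subspace $W$ of codimension $c$ costs at most $c$ slices, and \cref{thm:d1bound} then gives $\SR(T)\le c+2\CR(M|_W)$.

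The gap is Step 3, which is the entire content of the corollary. You never produce $W$, and the ingredients you propose for it are faulty:
\begin{itemize}
\item The density claim $\Pr_z[\rank M(z)\le r]\ge q^{r-\AR(T)}$ does not follow from $q^{-\AR(T)}=\sete_z q^{-\rank M(z)}$. For $r\ge\AR(T)$ it would force $\rank M(z)\le r$ for every $z$. Already for $T=x_1y_1z_1$ one has $\Pr_z[\rank M(z)=0]=1/q<(2q-1)/q^2=q^{-\AR(T)}$. What does hold is a Markov-type bound such as $\Pr_z[\rank M(z)\le r]\ge q^{-\AR(T)}-q^{-r-1}$.
\item Your sufficient condition ``$c\le\AR(T)$ and $\MaxR(M|_W)\le\AR(T)$'' is unattainable in general. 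For the same $T$, $\AR(T)=2-\log_q(2q-1)<1$ forces $c=0$, and then $\MaxR(M|_W)=1>\AR(T)$. The corollary still holds there, but only via $c=1$ and $\MaxR(M|_W)=0$.
\end{itemize}
So a genuine trade-off between $c$ and the rank on $W$ is required. A density-increment or Chevalley--Warning search for a subspace of small \emph{max}-rank aims at a harder target than necessary.

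Two ideas are missing. First, do not pass through max-rank. \cref{lem:ercrint}, applied to $M|_W$, already gives $\CR(M|_W)\le\frac{q}{q-1}\sete_{z\in W}[\rank M(z)]$, so it suffices to control the \emph{average} rank on $W$. Routing through $\MaxR(M|_W)\le\CR(M|_W)$ on top of an average-rank bound would only give the constant $1+\frac{2q^2}{(q-1)^2}$.

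Second, $W$ comes from a short averaging argument over the zero set; this is \cref{lem:ar-avg}. Let $Z=\{(z,x): x^\top M(z)=0\}$ and take $(z,x)$ uniform in $Z$.
\begin{itemize}
\item By your formula, $\abs{Z}=q^{2n-\AR(T)}$. The fibre over $z$ has size $q^{n-\rank M(z)}$, so $\sete\, q^{\rank M(z)}=q^{\AR(T)}$, and Jensen gives $\sete[\rank M(z)]\le\AR(T)$.
\item Symmetrically, the fibre over $x$ is the subspace $W_x=\{z: x^\top M(z)=0\}$. The same computation gives $\sete[\codim W_x]\le\AR(T)$.
\item Conditioned on $x=x_0$, the point $z$ is uniform on $W_{x_0}$. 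Hence for any $a\ge 0$ some $x_0$ satisfies $a\,\sete_{z\in W_{x_0}}[\rank M(z)]+\codim W_{x_0}\le(a+1)\AR(T)$.
\end{itemize}
Taking $a=\frac{2q}{q-1}$ and $W=W_{x_0}$ yields $\SR(T)\le\codim W+2\CR(M|_W)\le(a+1)\AR(T)=\left(3+\frac{2}{q-1}\right)\AR(T)$ directly. No tensor-power trick is needed.
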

It is unclear if we should expect $3$ to be the optimal constant.

Beyond degree $1$, \cref{thm:main} can also be regarded as a special case of the conjecture \cite{Lov19,KZ21,AKZ21} that the analytic rank and the partition rank of tensors are equivalent. Indeed, an $\calm_d$-matrix $M$ naturally corresponds to a $(d+2)$-tensor $T$, and one can show that $\AR(T) \leq \MaxR(M)$ and $\PR(T) \leq \PR(M)$. In this language, our results are especially notable since the analytic rank has been recently proven to be equivalent to many other notions of tensor rank \cite{CY25,BL24}, while the partition rank has resisted such efforts. \cref{thm:main} appears to be the first result giving linear relations between the partition rank and other notions of rank over small fields and arbitrary degrees. For an extended discussion on the relation between \cref{thm:main} and ranks of tensors, we refer the reader to \cref{sec:arpr}.

\subsection{Proof overview}
The bulk of the proof of \cref{thm:main} lies in upper-bounding the partition rank of matrices of low commutative rank. To do so, we define a generalization of the Schur complement from linear algebra to tensors. As a refresher, suppose we have a matrix (of scalars) $M$ with a block decomposition
\[M = \begin{pmatrix}
A & B \\
C & D
\end{pmatrix}\]
where $A$ is invertible and of size $r \times r$. Then, the Schur complement $M/A \coloneq D - CA\inv B$ has the property that $\rank (M/A) = \rank M - r$. Moreover,
\[M - \begin{pmatrix}
0 & 0 \\
0 & M/A
\end{pmatrix} = \begin{pmatrix} A & B \\ C & CA\inv B \end{pmatrix} = \begin{pmatrix} A \\ C \end{pmatrix} A\inv \begin{pmatrix} A & B \end{pmatrix},\]
which is manifestly the sum of $r$ outer products. Therefore, this process allows us to write $M$ as the sum of outer products and a remainder term, given by the Schur complement, of lower rank.

Now suppose $M$ has entries in $\calm_d$. We may consider taking the Schur complement over the field of rational functions, yielding some matrix $M'$ such that $M - M'$ is the sum of a bounded number of outer products and $M'$ has lower (commutative) rank than $M$. However, in doing so a serious issue arises, which is that by inverting a submatrix we lose multilinearity. In particular, the factors in the outer product decomposition of $M - M'$, as well as the entries of $M'$ itself, become arbitrary rational functions. Fortunately, under certain circumstances, we are able to repair this issue, by approximating these rational functions with multilinear polynomials. By iterating this procedure, we are able to reduce the remainder term to zero, producing a full decomposition of $M$. These operations are similar to those that have appeared in previous work of the authors \cite{MZ22}, see \cref{sec:arpr} for more details.

A secondary key idea in the proof is the use of the so-called ``method of multiplicities'', where we deduce results from the fact that a polynomial of bounded degree can not vanish to high order at every point. This argument is used in both statements of \cref{thm:main}.

\subsection{Outline}
After some preliminary definitions in \cref{sec:prelim}, we define an approximation procedure in \cref{sec:approx} and use it to define a differential Schur complement in \cref{sec:sc}. This is used to prove \cref{thm:main} in \cref{sec:mainproof}. We prove \cref{thm:d1bound,cor:arsr} in \cref{sec:d1}; since \cref{sec:approx} of the paper is quite notationally heavy, we reprove everything in the $d=1$ case, where the arguments are significantly simpler. In particular, \cref{sec:d1} is mostly self-contained. We discuss the relation to tensor ranks of arbitrary degree in \cref{sec:arpr}.

\section{Preliminaries} \label{sec:prelim}
\subsection{Multi-homogeneous polynomials} 
For an unbounded parameter $n$, let $x_1 = (x_{1,1},\ldots,x_{1,n}), x_2 = (x_{2,1},\ldots,x_{2,n}),\ldots$ be a sequence of tuples of variables, and for a finite subset $S$ of the positive integers $\setn$, let $\calm_S$ be the set of multilinear forms (or multilinear homogeneous polynomials) over $\setf$ that are individually linear in $(x_i)_{i\in S}$. This is the degree-$(1,1,\ldots,1)$ component of the $\setz_{\geq 0}^S$-graded ring $\calr_S \coloneq \setf[x_{i,j} : i\in S, j\in [n]]$. For consistency with the introduction, we let $\calr_d$ and $\calm_d$ be abbreviations for $\calr_{[d]}$ and $\calm_{[d]}$, respectively.

Given $\vp_S = (p_i)_{i \in S} \in (\setf^n)^S$ and $f \in \calr_S$, we may evaluate the polynomial $f$ at $\vp_S$, yielding $f(\vp_S) \in \setf$. We will also require the concept of partial evaluations, where given $f \in \calr_S$ and $\vp_{S\setminus S'}=(p_i)_{i \in S\setminus S'} \in (\setf^n)^{S \setminus S'}$,
we obtain some polynomial $f[\vp_{S\setminus S'}] \in \calr_{S'}$ by setting $x_i = p_i$. The choice of $S'$ should always be clear from context.

Elements of $\setf^n$ are denoted with unbolded letters, while bolded letters denote collections of such elements. Given such a collection $\vx$, we always let $x_i$ be the $i$th element of $\vx$, and let $\vx_S$ be shorthand for $(x_i)_{i \in S}$.
As usual, $e_i=(0,\ldots,0,1,0,\ldots,0)$ denotes the $i$th standard basis vector.

\subsection{Matrices}
The main objects we study in this paper are $L$-matrices, i.e.\ matrices with entries in $L$, where $L$ is some $\setf$-vector space (like $\calm_d$) embedded in an $\setf$-algebra.\footnote{Every notion that we will define will be independent of the choice of $\setf$-basis, in the sense that it will be invariant/equivariant under changes of bases over $\setf$. In other words, an $a \times b$ $L$-matrix can be naturally viewed as an $\setf$-bilinear map $B \colon \setf^a \times \setf^b \to L$, and we will not use any structure of $\setf^a$ and $\setf^b$ beyond them being $a$- and $b$-dimensional vector spaces.}

Analogously, we define an $L$-vector to be a vector with entries in $L$, which can equivalently be viewed as an $\setf$-linear map $\setf^a \to L$. If $R$ is a ring, given two $R$-vectors $u$ and $v$, we may form their outer product $u \otimes v$, which is an $R$-matrix. In terms of linear maps, if $u$ and $v$ correspond to $\ell_1 \colon \setf^a \to R$ and $\ell_2 \colon \setf^b \to R$, then $u \otimes v$ corresponds to the bilinear map $(x,y) \mapsto \ell_1(x)\ell_2(y)$.

If $R$ is an integral domain and $M$ is an $R$-matrix, we let $\rank(M)$ denote the rank of $M$, when viewed as a matrix over the field of fractions of $R$. Note that for $M$ an $\calm_d$-matrix, $\CR(M) = \rank(M)$; we will use both notations depending on context.

Given a matrix $M$ and a function $\varphi$ on its entries, we let $\varphi(M)$ denote the matrix obtained by applying $\varphi$ entrywise. Note that in order for this to be basis-independent, it is both necessary and sufficient that $\varphi$ be $\setf$-linear.

\subsection{Tensors}
For the purposes of this paper, a $d$-tensor is an element of $\calm_d$.\footnote{$d$-tensors are naturally in bijection with $d$-dimensional arrays of scalars:
$\sum_{i_1,\ldots,i_d \in [n]} c_{i_1,\ldots,i_d} x_{1,i_1}\cdots x_{d,i_d}$ corresponds to $(c_{i_1,\ldots,i_d})_{i_1,\ldots,i_d \in [n]}$.}
For a $d$-tensor $T \in \calm_d$, we define the following notions of rank:
\begin{itemize}
\item If $\setf = \setf_q$, define the \vocab{analytic rank} $\AR(T) = -\log_{q}\bias(T)$, 
where $\bias(T) = \Pr_{\vp \in (\setf^n)^{d-1}}[T[\vp]=0]$. Here, and for the rest of the paper, $\Pr_a$ and $\sete_a$ denote the probability and expectation, respectively, taken over a uniformly random choice of $a$.
\item The \vocab{partition rank} $\PR(T)$ is the minimum $r$ such that $T$ can be written as a sum of $r$ reducible tensors, i.e.\ tensors of the form $T_1 T_2$, where $T_1 \in \calm_S$ and $T_2 \in \calm_{S'}$ for some nontrivial partition $S \sqcup S' = [d]$. (The choice of partition may vary.)
\item The \vocab{slice rank} $\SR(T)$ is defined similarly to partition rank, except it requires $\deg(T_1)=1$ (so $\abs{S} = 1$).
\end{itemize}
For $d=3$, partition rank and slice rank are equal since every nontrivial bipartition of $\set{1,2,3}$ has a singleton.

If $d \geq 2$, any tensor $T \in \calm_d$ can be associated with an $\calm_{d-2}$-matrix, which we denote $\mat{T}$. Specifically, $\mat{T} = (m_{ij})_{i,j=1}^n$, where $m_{ij} \in \calm_{d-2}$ are the unique choices such that
\[T = \sum_{i,j=1}^n m_{ij} x_{d-1,i} x_{d,j}.\]
Put differently, $T(x_1,\ldots,x_d) = x_{d-1}^t \mat{T}(x_1,\ldots,x_{d-2}) x_{d}$.

We caution the reader that the term ``partition rank'' is used for two slightly different notions of rank. Converted back into the notion of tensors, $\PR(\mat{T})$ is defined similarly to the partition rank of $T$, except that in each product, $|S \cap \{d-1,d\}| = 1$.
So $\PR(\mat{T}) \geq \PR(T)$, but equality may not hold.

\section{Approximation} \label{sec:approx}
In this section we will describe a procedure to approximating rational functions by multilinear forms, which will be essential in defining the differential Schur complement. This section is the most technical in the paper.

\subsection{Approximating rational functions}
For $\vp \in (\setf^n)^d$, let $\calr_{d,\vp}$ be the localization of $\calr_d$ obtained by inverting all homogeneous polynomials that do not vanish at $\vp$, i.e.\ the ring of all homogeneous rational functions in $\setf(x_1,\ldots,x_d)$ that are defined at $\vp$.
Note that this inherits a $\setz^d$-grading from $\calr_d$, by letting $\deg(A/B) = \deg A - \deg B \in \setz^d$.
Let $\calm_{d,\vp}$ denote the degree-$(1,1,\ldots,1)$ component of $\calr_{d,\vp}$. We now define an approximation map based on taking $d$ directional derivatives at $\vp$.
\begin{defn} \label{def:approx}
The map $[-]_\vp \colon \calm_{d,\vp} \to \calm_d$ is defined so that
\[[f]_\vp (\vy) = \left.\frac{\partial}{\partial t_1} \cdots \frac{\partial}{\partial t_d} f(p_1 + t_1y_1, \ldots, p_d + t_dy_d)\right|_{t_1 = \cdots = t_d = 0}.\]
Equivalently,
\[[f]_\vp = \sum_{i_1,\ldots,i_d \in [n]} c_{i_1,\ldots,i_d} x_{1,i_1}\cdots x_{d,i_d} \in \calm_d\]
where
\[c_{i_1,\ldots,i_d}=\frac{\partial^d f}{\partial x_{1,i_1}\cdots \partial x_{d,i_d}}(\vp).\]
\end{defn}

Note that if $f \in \calm_d$, $[f]_\vp(\vy)$ is the coefficient of $t_1\cdots t_d$ in $f(p_1 + t_1y_1,\ldots,p_d + t_dy_d)$, which is just $f(\vy)$. In particular, $[[f]_\vp]_\vp = [f]_\vp$.

\begin{examp} \label{ex:approx}
Suppose $d = 1$ and
\[f = \frac{\alpha_1^2 + \alpha_2^2}{\alpha_1 + \alpha_2} \in \calm_{1,e_1},\]
where $\alpha_i$ is short for $x_{1,i}$. Then
\[\frac{\partial f}{\partial \alpha_1} = \frac{\alpha_1^2 + 2\alpha_1\alpha_2 - \alpha_2^2}{(\alpha_1+\alpha_2)^2} \quad\text{and}\quad \frac{\partial f}{\partial \alpha_2} = \frac{-\alpha_1^2 + 2\alpha_1\alpha_2 + \alpha_2^2}{(\alpha_1+\alpha_2)^2},\]
which evaluate to $1$ and $-1$ at $e_1$. Therefore $[f]_{e_1} = \alpha_1 - \alpha_2$.
\end{examp}

\cref{def:approx} is suprisingly difficult to work with as it does not treat $\calm_d$ as a subset of $\calm_{d,\vp}$, so will henceforth use an alternate definition of the approximation map. To formulate it, for $i \in [d]$, let $\frakp_i \subseteq \calr_{d, \vp}$ denote the ideal generated by homogeneous rational elements vanishing at $x_i = p_i$. 
\begin{prop} \label{prop:eltapprox}
For $f \in \calm_{d,\vp}$, the approximation $[f]_\vp$ is the unique $\bar f \in \calm_d$ such that $f - \bar f \in \sum_{i\in [d]} \frakp_i^2$.
\end{prop}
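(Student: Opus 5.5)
Two things need proving: existence, namely that $f-[f]_\vp\in\sum_i\frakp_i^2$, and uniqueness, namely that a form $g\in\calm_d$ lying in $\sum_i\frakp_i^2$ must be zero. The basic tool is the derivative functional from \cref{def:approx}, extended to all of $\calr_{d,\vp}$. For $h\in\calr_{d,\vp}$ and a direction $\vy$, set
\[D_\vy(h)=\left.\partial_{t_1}\cdots\partial_{t_d}\, h(p_1+t_1y_1,\ldots,p_d+t_dy_d)\right|_{t=0}.\]
This is well defined because $h$ is regular at $\vp$. Its key property is that $D_\vy$ kills $\frakp_i^2$ for every $i$. Indeed, a generator $g$ of $\frakp_i$ is a homogeneous rational function vanishing on $x_i=p_i$. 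Hence $g(p_1+t_1y_1,\ldots)$, as a power series in $t$, is divisible by $t_i$. So every element of $\frakp_i^2$ becomes divisible by $t_i^2$, and its coefficient of $t_1\cdots t_d$ is $0$.

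Uniqueness then follows quickly. If $g\in\calm_d\cap\sum_i\frakp_i^2$, then $D_\vy(g)=0$ for all $\vy$. But the remark after \cref{def:approx} gives $D_\vy(g)=g(\vy)$ for $g\in\calm_d$. So $g$ vanishes at every $\vy$ over the algebraic closure, i.e.\ $g=0$ as a polynomial. To make the last step airtight over finite fields, I will evaluate over $\overline{\setf}$, or simply read off coefficients from \cref{def:approx}.

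Existence is the main step. Set $h=f-[f]_\vp\in\calm_{d,\vp}$; it satisfies $D_\vy(h)=0$ for all $\vy$, i.e.\ all mixed partials $\partial^d h/\partial x_{1,i_1}\cdots\partial x_{d,i_d}$ vanish at $\vp$. I want to show $h\in\sum_i\frakp_i^2$, and I would argue via multihomogeneity. After a linear change of coordinates in each block (which preserves everything), assume $p_i=e_1$ whenever $p_i\neq0$.

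First, the blocks with $p_i=0$. There any element of degree $1$ in $x_i$ has numerator vanishing at $x_i=0$. Write $h=\sum_j x_{i,j}h_j$, which is possible since $h$ has a representative $A/B$ with $B(\vp)\neq0$ and $A$ linear in $x_i$; each $h_j$ has $x_i$-degree $0$. A product of two linear-in-$x_i$ terms would lie in $\frakp_i^2$, so I do not try to strip these blocks off. Instead I absorb them: the condition on $h$ then reduces to a condition on the $h_j$ with one fewer block, and I induct on $d$.

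Next, the blocks with $p_i=e_1$. Here $\frakp_i$ is generated by $x_{i,j}/x_{i,1}$ for $j\ge2$; note $x_{i,1}$ is invertible since it does not vanish at $\vp$. Dehomogenize in each such block by setting $u_{i,j}=x_{i,j}/x_{i,1}$, and write $h=\prod_i x_{i,1}\cdot\tilde h(u)$, where $\tilde h$ is a degree-zero function regular at $u=0$. Taylor-expand $\tilde h$ to first order in each block of $u$, modulo $(u_i)^2$. What remains is a sum of products of the constant and linear terms in the various blocks, and these are exactly the multilinear forms. The vanishing of $D_\vy h$ forces this truncation to be zero, leaving $\tilde h\in\sum_i (u_i)^2$.

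The obstacle I expect is making the expansion modulo $\sum_i(u_i)^2$ rigorous in the local ring. Concretely, I need that $\calr_{d,\vp}$ modulo $\sum_i\frakp_i^2$ is spanned over the degree-$0$ residue field by monomials linear in each $u_i$. I will prove this by expanding one variable block at a time. Write a regular function as its value along $u_i=0$ plus a linear part in $u_i$ plus an element of $(u_i)^2$, each piece regular at $\vp$; this uses Hadamard's lemma for rational functions, i.e.\ writing $F(u)-F(0)$ as a combination of the $u_{i,j}$ with regular coefficients.
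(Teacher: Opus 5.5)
Your proposal is correct and is essentially the paper's argument. Both proofs rest on two facts: (i) every element of $\calm_{d,\vp}$ is congruent modulo $\sum_i \frakp_i^2$ to a multilinear form, obtained by dehomogenizing to $x_{i,j}/x_{i,1}$ and truncating to multi-affine terms; and (ii) the $t_1\cdots t_d$-coefficient functional kills each $\frakp_i^2$ and fixes $\calm_d$. You apply (i) to $f-[f]_\vp$ rather than to $f$, and you prove it by iterated Hadamard expansion rather than by inverting the denominator in the local ring $\calr'/\fraka$. You also treat blocks with $p_i=0$ explicitly, a case the paper's ``without loss of generality $p_i=e_1$'' silently excludes, though it never arises in the paper's applications.
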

\begin{examp}
If $f$ is as in \cref{ex:approx}, then
\[f - [f]_{e_1} = \frac{\alpha_1^2 + \alpha_2^2}{\alpha_1 + \alpha_2} - \frac{\alpha_1^2 - \alpha_2^2}{\alpha_1 + \alpha_2} = \frac{2\alpha_2^2}{\alpha_1+\alpha_2},\]
which is in $\frakp_1^2$ as $\alpha_2 \in \frakp_1$.
\end{examp}

Before we prove \cref{prop:eltapprox}, we first work out a concrete description of $\calm_{d,\vp}$ which will be useful for future computations. For this description, assume that $p_i = e_1$ for all $i$. If we let 
\[z_{i} = (x_{i,2}/x_{i,1}, \ldots, x_{i,n}/x_{i,1})\]
for $i \in [d]$, every homogeneous polynomial in $\calr_d$ of degree $(k_1,k_2,\ldots,k_d)$ can be written as
\[x_{1,1}^{k_1} x_{2,1}^{k_2} \cdots x_{d,1}^{k_d} f(z_1,\ldots,z_d)\]
where $f$ is a (not necessary homogeneous) \emph{polynomial} of (total) degree at most $k_i$ in $z_i$. This polynomial is nonvanishing at $\vp$ if $f$ has a nonzero constant term. As a result, the degree-$(k_1,\ldots,k_d)$ elements of $\calr_{d,\vp}$ are of the form
\[x_{1,1}^{k_1} x_{2,1}^{k_2} \cdots x_{d,1}^{k_d}g(z_1,\ldots,z_d)\]
where $g$ is any rational function defined at $0$. From this characterization we may deduce $\frakp_i = \gen{z_{i,2},\ldots,z_{i,d}}$.

\begin{proof}[Proof of \cref{prop:eltapprox}]
Without loss of generality suppose $p_i = e_1$ for all $i$. We first show that an $\bar f$ exists with $f - \bar f \in \sum_{i\in [d]} \frakp_i^2$, and then show that any such $\bar f$ must be equal to $[f]_\vp$.

Let $g(z_1,\ldots,z_d) = f/(x_{1,1} \cdots x_{d,1})$ be a rational function defined at $z_i = 0$. We wish to find a polynomial $\bar g(z_1,\ldots,z_d)$ that is multi-affine (i.e.\ affine in each individual $z_i$) such that $g - \bar g \in \sum_{i \in [d]} \frakp^2_i$. If we find such a $\bar g$, we may let $\bar f = x_{1,1}\cdots x_{d,1} \bar g$.

Write $g = A/B$ where $A,B$ are polynomials in the ring $\calr' = \setf[z_{i,j}]$. Let $\fraka \subseteq \calr'$ be the ideal given by $\sum_i \gen{z_{i,2},\ldots,z_{i,d}}^2$. Since $\calr'/\fraka$ is a local ring with maximal ideal $\gen{z_{i,j}}$ and $B$ is not in this maximal ideal, there exists some $C \in \calr'$ such that $BC - 1 \in \fraka$. As a result,
\[g - AC = \frac{A - ABC}{B} \in \sum_{i\in [d]}\frakp_i^2\]
so $g$ is equivalent modulo $\sum_{i\in [d]}\frakp_i^2$ to some polynomial. All monomials of degree at least $2$ in some $z_i$ are in $\frakp_i^2$, so we may delete all such monomials to get a multi-affine $\bar g$, as desired.

To show that $\bar f = [f]_\vp$, we first claim that if $h \in \frakp_i^2$, then $[h]_\vp = 0$. Indeed, taking a directional derivative with respect to $x_j$ is the same as to taking a directional derivative with respect to $(x_{j,1}, z_j)$. If $i \neq j$, this sends elements of $\frakp_i^2$ to elements of $\frakp_i^2$, while for $i = j$, this sends elements of $\frakp_i^2$ to elements of $\frakp_i$. Thus
\[\left.\frac{\partial}{\partial t_1} \cdots \frac{\partial}{\partial t_d} h(x_1 + t_1y_1, \ldots, x_d + t_dy_d)\right|_{t_1 = \cdots = t_d = 0} \in \frakp_i,\]
and evaluating at $\vx = \vp$ yields $[h]_\vp = 0$.

As a result, we have $[f]_\vp - [\bar f]_\vp = [f-\bar f]_\vp = 0$, so $[f]_\vp = [\bar f]_\vp$. However, since $\bar f\in \calm_d$, we have $[\bar f]_\vp = \bar f$. This concludes the proof.
\end{proof}

For the following section, we will also need a strange-looking auxiliary result. To state it, define the ring $\cals = \calr_{d,\vp}[y_1,\ldots,y_d]$, where $y_1,\ldots,y_d$ are free scalar variables. By assigning $y_i$ to have the same degree as $x_i$ (i.e.\ $e_i=(0,\ldots,0,1,0,\ldots,0)$), this remains a $\setz^d$-graded ring. Define the $\setf$-linear degree-preserving map $Y\colon \calm_{d,\vp} \to \cals$ given by
\[Y(f) = \sum_{S \subseteq [d]} f[\vp_S] \cdot \prod_{i \in S} y_s.\]
Finally, let $\frakq \subseteq \cals$ be the ideal generated by homogeneous elements that vanish when evaluated at $x_i = p_i$ and $y_i = -1$.
\begin{lem} \label{lem:yprops}
If $f \in \calm_{d,\vp}$, then $Y(f) \in \frakq^d$. If furthermore $f \in \frakp_i^2$ for some $i$, then $Y(f) \in \frakq^{d+1}$.
\end{lem}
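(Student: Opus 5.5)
The plan is to reduce to coordinates, factor $Y$ as a composition of commuting one-variable operators, and then split $f$ by inclusion–exclusion so that every term is visibly a product of $d$ generators of $\frakq$.

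\textbf{Coordinates.} As in the proof of \cref{prop:eltapprox}, a change of basis lets us assume $p_i = e_1$ for all $i$. Write $f = x_{1,1}\cdots x_{d,1}\, g(z_1,\ldots,z_d)$, where $g$ is a rational function defined at $0$. Partially evaluating at $x_i = p_i = e_1$ sets $x_{i,1}=1$ and $z_i = 0$. Let $E_i$ denote the substitution $z_i \mapsto 0$. Then
\[Y(f) = \sum_{S\subseteq[d]} \Bigl(\prod_{i\notin S} x_{i,1}\Bigr)\Bigl(\prod_{i\in S} y_i\Bigr) E_S g, \qquad E_S = \prod_{i\in S}E_i.\]
Each $E_i$ is a ring homomorphism on rational functions in the $z$'s defined at $0$, and the $E_i$ commute.

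\textbf{Generators of $\frakq$.} We use the following elements, all of which vanish at $x_i = p_i$, $y_i = -1$:
\begin{itemize}
\item each $x_{i,j}$ with $j\ge 2$, and hence each $z_{i,j} = x_{i,j}/x_{i,1}$, since $x_{i,1}$ is a unit of $\cals$;
\item each $x_{i,1} + y_i$.
\end{itemize}
Since $\frakq$ is generated by homogeneous elements, any product of $k$ such elements times an arbitrary element of $\cals$ lies in $\frakq^k$. The degree bookkeeping works because $z_{i,j}$ has degree $0$, and $x_{i,1}$ and $y_i$ both have degree $e_i$.

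\textbf{Inclusion–exclusion.} Set $\Delta_i = 1 - E_i$, so that $\Delta_i g$ lies in the ideal generated by $z_{i,2},\ldots,z_{i,n}$ (this ideal is $\frakp_i$ after rescaling). Writing $1 = E_i + \Delta_i$ for each $i$ gives
\[g = \sum_{T\subseteq[d]} h_T, \qquad h_T = \Bigl(\prod_{i\in T}\Delta_i\Bigr)\Bigl(\prod_{i\notin T}E_i\Bigr) g.\]
Because $E_i\Delta_i = 0$, $E_i E_i = E_i$, and all these operators commute, we have $E_S h_T = h_T$ if $S\cap T = \emptyset$ and $E_S h_T = 0$ otherwise. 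Substituting into the formula for $Y(f)$ and summing over $S \subseteq [d]\setminus T$ factors the sum:
\[Y(f) = \sum_{T} h_T \prod_{i\in T} x_{i,1} \prod_{i\notin T}\bigl(x_{i,1}+y_i\bigr).\]

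\textbf{First claim.} Applying $\Delta_i$ for each $i \in T$ in turn, $h_T$ lies in the product of the $|T|$ ideals $(z_{i,\cdot})$ for $i\in T$, hence in $\frakq^{|T|}$. The $d-|T|$ factors $x_{i,1}+y_i$ for $i\notin T$ each lie in $\frakq$. So every term lies in $\frakq^d$, and therefore $Y(f)\in\frakq^d$.

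\textbf{Second claim.} Suppose $f\in\frakp_i^2$, i.e.\ $g$ lies in the square of the ideal $(z_{i,2},\ldots,z_{i,n})$. For $j\ne i$, the ring homomorphism $E_j$ fixes each $z_{i,\cdot}$, so $E_j$ and $\Delta_j = 1 - E_j$ both preserve this squared ideal.
\begin{itemize}
\item If $i\notin T$, then $h_T$ contains a factor $E_i$, which kills the squared ideal, so $h_T = 0$.
\item If $i\in T$, then $\Delta_i$ acts as the identity on the squared ideal. Hence $h_T$ lies in that squared ideal times the ideals $(z_{j,\cdot})$ for $j\in T\setminus\{i\}$. This places $h_T$ in $\frakq^{|T|+1}$.
\end{itemize}
Each surviving term therefore gains one extra factor of $\frakq$, giving $Y(f)\in\frakq^{d+1}$.

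\textbf{Main obstacle.} The main point to verify carefully is the second claim: that $E_j$ and $\Delta_j$ for $j\ne i$ really preserve the squared ideal inside the local ring. This holds because $E_j$ is a ring homomorphism on rational functions defined at $0$ and fixes each $z_{i,\cdot}$. A second, routine check is that the ideal generated by the products above sits inside $\frakq^k$ in the graded sense, since $z_{i,j}$ has degree $0$ and $x_{i,1}$ is invertible.
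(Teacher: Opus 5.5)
Your proposal is correct and takes essentially the same route as the paper. After normalizing $p_i = e_1$, your terms $h_T\prod_{i\in T}x_{i,1}\prod_{i\notin T}(x_{i,1}+y_i)$ coincide (up to sign) with the paper's terms $g_{S'}\prod_{i\notin S'}x_{i,1}\prod_{i\in S'}y'_i$ for $S' = [d]\setminus T$; the paper reaches them by substituting $y_i = y'_i - x_{i,1}$ rather than via your identity $1 = E_i + \Delta_i$, and it handles the second claim the same way you do. One small advantage of your version is that applying the $\Delta_i$ one at a time shows membership in the product of the $\frakp_i$ directly, whereas the paper deduces it from vanishing on each hyperplane $z_i = 0$ without comment, implicitly using that the relevant intersections of the $\frakp_i$ equal their products.
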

\begin{proof}
Assume that $p_i = e_1$ for all $i$. Writing $f = x_{1,1} \cdots x_{d,1} g(z_1,\ldots,z_d)$ and $y'_i = x_{i,1} + y_i \in \frakq$, we have
\begin{align*}
Y(f) &= \sum_{S \subseteq [d]} \paren*{\prod_{i\in [d]\setminus S} x_{i,1} \cdot \prod_{i \in S} y_i \cdot g(\vz_{S \to 0})} \\
&= \sum_{S \subseteq [d]} \paren*{\prod_{i\in [d]\setminus S} x_{i,1} \cdot \prod_{i \in S} (y'_i - x_{i,1}) \cdot g(\vz_{S \to 0})} \\
&= \sum_{S' \subseteq S \subseteq [d]} \paren*{(-1)^{\abs{S} - \abs{S'}}\prod_{i\in [d]\setminus S'} x_{i,1} \cdot \prod_{i \in S'} y'_i \cdot g(\vz_{S \to 0})} \\
&= \sum_{S' \subseteq [d]} \paren*{\prod_{i\in [d]\setminus S'} x_{i,1} \cdot \prod_{i \in S'} y'_i \cdot \sum_{S \supseteq S'} (-1)^{\abs{S} - \abs{S'}} g(\vz_{S \to 0})}.
\end{align*}
where $\vz_{S \to 0}$ denotes $(z_1,\ldots,z_d)$ but with $z_i$ replaced with $0$ for all $i \in S$. 
For each given $S'$, we have $\prod_{i \in S'} y'_i \in \frakq^{\abs{S'}}$, since $y'_i \in \frakq$, and the rational function
\[g_{S'} \coloneq \sum_{S \supseteq S'} (-1)^{\abs{S}} g(\vz_{S \to 0})\]
is such that it evaluates to zero when any $z_i$ with $i \notin S'$ is set to zero. Hence it lies in $\prod_{i \notin S'} \frakp_i \subseteq \frakq^{d - \abs{S'}}$.
This shows that $Y(f) \in \frakq^d$.

If $f \in \frakp^2_i$, then we have $g(\vz_{S \to 0})$ is in $\frakp_i^2$ for all $S$ and zero if $i \in S$. Therefore, if $i \in S'$, we have $g_{S'} = 0$, and if $i \notin S'$, we have $g_{S'} \in \frakp_i^2 \cap \prod_{i \notin S'} \frakp_i = \frakp_i \cdot \prod_{i \notin S'} \frakp_i \subseteq \frakq^{d + 1 - \abs{S}}$. Thus $Y(f) \in \frakq^{d+1}$.
\end{proof}

\subsection{Approximating matrices}
Since $[-]_\vp$ is $\setf$-linear, for an $\calm_{d,\vp}$-matrix $M$ we may now construct $[M]_\vp$, recalling that this means applying $[-]_\vp$ to $M$ entrywise.  We prove two results about this operation. Both these bounds are tight (see \cref{ex:tight}).
\begin{prop} \label{lem:mppr}
If $u$ and $v$ are homogeneous $\calr_{d,\vp}$-vectors such that $u \otimes v$ is an $\calm_{d,\vp}$-matrix, then $\PR([u \otimes v]_\vp) \leq 2^d$.
\end{prop}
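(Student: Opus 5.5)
The plan is to work directly from \cref{def:approx} and apply the Leibniz rule to the $d$ mixed directional derivatives. This splits $[u\otimes v]_\vp$ into $2^d$ terms, one for each $S\subseteq[d]$. Each term will be a single outer product of a vector of forms multilinear in $\vy_S$ with a vector of forms multilinear in $\vy_{[d]\setminus S}$, which is exactly the kind of matrix counted by $\PR$.

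First, fix $S\subseteq[d]$. For an entry $u_a\in\calr_{d,\vp}$ of $u$, define
\[ u_{a,S}(\vy_S) \coloneq \left.\prod_{i\in S}\frac{\partial}{\partial t_i}\, u_a(p_1+t_1y_1,\ldots,p_d+t_dy_d)\right|_{t_1=\cdots=t_d=0}. \]
Here the coordinates $t_j$ with $j\notin S$ are simply set to $0$, so the expression does not depend on $y_j$ for $j\notin S$. Since $u_a$ is a rational function defined at $\vp$, its partial derivatives of all orders are also defined at $\vp$. Hence $u_{a,S}$ equals
\[ \sum_{(i_s)_{s\in S}} \frac{\partial^{\abs{S}}u_a}{\prod_{s\in S}\partial x_{s,i_s}}(\vp)\prod_{s\in S}y_{s,i_s}, \]
a scalar-coefficient polynomial that is linear in each $y_s$ with $s\in S$. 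This is the same computation as in the second formula of \cref{def:approx}. Relabeling $\vy$ as $\vx$, we get $u_{a,S}\in\calm_S$. Define $v_{b,[d]\setminus S}\in\calm_{[d]\setminus S}$ in the same way.

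Next, apply the product rule to each entry $u_av_b$ of $u\otimes v$. Because each $\partial/\partial t_i$ is applied once, it lands either on the factor $u_a$ or on the factor $v_b$. This gives
\[ \left.\frac{\partial}{\partial t_1}\cdots\frac{\partial}{\partial t_d}\,(u_av_b)(\vp+t\vy)\right|_{t=0} = \sum_{S\subseteq[d]} u_{a,S}(\vy_S)\, v_{b,[d]\setminus S}(\vy_{[d]\setminus S}), \]
after setting all $t_i=0$ in each factor. Combining over all $a,b$, we obtain
\[ [u\otimes v]_\vp = \sum_{S\subseteq[d]} u_S\otimes v_{[d]\setminus S}, \]
where $u_S=(u_{a,S})_a$ is an $\calm_S$-vector and $v_{[d]\setminus S}$ is an $\calm_{[d]\setminus S}$-vector. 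Each summand is one of the multilinear outer products in the definition of $\PR$, so $\PR([u\otimes v]_\vp)\le 2^d$.

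The only point needing care is that the Leibniz expansion is legitimate in $\calr_{d,\vp}$. Because $u_a$ and $v_b$ are individually defined at $\vp$, the one-variable-per-$t_i$ Leibniz rule for rational functions applies. Only first derivatives in each $t_i$ occur, so no second-order terms appear. Homogeneity of $u$ and $v$ and the hypothesis that $u\otimes v$ has entries in $\calm_{d,\vp}$ are needed only so that $[-]_\vp$ is defined on the entries. The decomposition itself does not use them.
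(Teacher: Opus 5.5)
Your proof is correct, and it takes a genuinely different route from the paper's.

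\textbf{The paper's route.} The paper deliberately avoids the derivative formula of \cref{def:approx} and works instead with the characterization in \cref{prop:eltapprox}. It proceeds as follows.
\begin{enumerate}
\item It normalizes to $p_i=e_1$.
\item It uses invertible elements of every degree to put all the degree on $u$, and writes $v=m^{-1}w$ with $m=x_{1,1}\cdots x_{d,1}$.
\item It uses that $\sum_i\frakp_i^2$ is an ideal to replace $u$ and $w$ by multilinear forms.
\item It splits $u=\sum_S u_S\prod_{i\notin S}x_{i,1}$, and likewise $w$.
\item It checks that the cross terms with $S\cap S'\neq\emptyset$ lie in some $\frakp_j^2$ and hence vanish. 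This leaves $\sum_{S\cap S'=\emptyset}u_S\otimes w_{S'}\prod_{i\notin S\cup S'}x_{i,1}$, grouped by $S$.
\end{enumerate}

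\textbf{Comparison.} Your Leibniz expansion does all of this in one step. It needs no normalization of $\vp$ or of degrees, no homogeneity, and no ideal-theoretic input. In the paper's reduced setting your decomposition is $\sum_{S'}u[\vp_{S'}]\otimes w_{S'}$, which is the same double sum grouped by $S'$ instead of $S$. The paper's route buys uniformity with the $\frakp_i$-machinery it needs anyway for \cref{lem:yprops} and \cref{lem:mprank}. Yours buys brevity and transparency, since the approximation of a product is governed exactly by the product rule.

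\textbf{One step to state more carefully.} This is the Leibniz rule itself. The denominator of $u_a(p_1+t_1y_1,\ldots,p_d+t_dy_d)$ has nonzero constant term in $t$, so every factor lives in the power series ring in $t_1,\ldots,t_d$ over $\setf[\vy]$. The identity you use is then just coefficient extraction. The coefficient of $t_1\cdots t_d$ in a product $FG$ is the sum over $S$ of (the coefficient of the monomial $\prod_{i\in S}t_i$ in $F$) times (the coefficient of $\prod_{i\notin S}t_i$ in $G$). This holds in every characteristic.
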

\begin{proof}
Again, assume that $p_i = e_1$ for all $i$ and let $m$ be the invertible element $x_{1,1}x_{2,1} \cdots x_{d,1}$. Since there exist invertible elements in $\calr_{d,\vp}$ of all degrees, we may assume that $u$ has degree $(1,1,\ldots,1)$ and $v$ has degree $(0,0,\ldots,0)$. Let $w = mv$, so that $[u \otimes v]_\vp = [m\inv(u \otimes w)]_\vp$. Since $\sum_{i\in [d]} \frakp_i^2$ is an ideal, we have $[m\inv(u \otimes w)]_\vp = [m\inv([u]_\vp \otimes [w]_\vp)]_\vp$, so we may assume that $u$ and $w$ are in $\calm_d$.

We can now explicitly describe $[m\inv(u \otimes w)]_\vp$. We may uniquely decompose
\[u = \sum_{S \subseteq [d]} u_S \prod_{i \notin S} x_{i,1} \quad\text{and}\quad w = \sum_{S \subseteq [d]} w_S \prod_{i \notin S} x_{i,1},\]
where $u_S$ and $w_S$ are $\calm_S$-vectors that do not involve $x_{i,1}$ for any $i \in S$. Observe
\[\brac*{\frac{u_S \prod_{i \notin S} x_{i,1} \otimes  w_{S'} \prod_{i \notin S'} x_{i,1}}{m}}_\vp = \brac*{\frac{u_S \otimes w_{S'}}{\prod_{i \in S \cap S'} x_{i,1}} \cdot \prod_{i \notin S \cup S'} x_{i,1}}_\vp.
\]
If $S \cap S'$ contains an element $j$, then every entry of $u_S \otimes w_{S'}$ is in $\frakp_j
^2$ and applying $[-]_\vp$ yields zero. Otherwise, the quantity $u_S \otimes w_{S'} \cdot \prod_{i \notin S \cup S'} x_{i,1}$ is already an $\calm_d$-matrix and approximation does nothing. Summing over $S$ and $S'$, we conclude
\[\brac*{\frac{u \otimes w}{m}}_\vp = \sum_{S \cap S' = \emptyset} u_S \otimes w_{S'}  \prod_{i \notin S \cup S'} x_{i,1} = \sum_{S \subseteq [d]} u_S \otimes \paren*{\sum_{S \cap S' = \emptyset} w_{S'} \prod_{i \notin S \cup S'} x_{i,1}},\]
which indeed has partition rank at most $2^d$.
\end{proof}
\begin{prop} \label{lem:mprank}
If $M$ is an $\calm_{d,\vp}$-matrix, then $\rank([M]_\vp) \leq \sum_{S \subseteq [d]} \rank M[\vp_S]$.
\end{prop}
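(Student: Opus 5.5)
The plan is to realize $[M]_\vp$, up to an invertible change of coordinates, as the ``leading form'' of the matrix $Y(M)$ in the $\frakq$-adic filtration of $\cals$, and then to bound ranks by a minor argument in the style of the method of multiplicities. First, $Y$ is $\setf$-linear, so applying it entrywise gives
\[Y(M) = \sum_{S \subseteq [d]} M[\vp_S] \prod_{i \in S} y_i,\]
viewed as a matrix over the fraction field of $\cals$. Each summand has rank at most $\rank M[\vp_S]$, since multiplying by the scalar $\prod_{i\in S} y_i$ and extending scalars do not raise rank. By subadditivity,
\[\rank Y(M) \leq \sum_{S\subseteq[d]} \rank M[\vp_S].\]
It therefore suffices to show $\rank([M]_\vp) \leq \rank Y(M)$.

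Next I compare $Y(M)$ with $Y([M]_\vp)$. By \cref{prop:eltapprox}, every entry of $M - [M]_\vp$ lies in $\sum_i \frakp_i^2$. Then \cref{lem:yprops} gives that every entry of $Y(M) - Y([M]_\vp)$ lies in $\frakq^{d+1}$, while every entry of both matrices lies in $\frakq^d$. For a multilinear $f \in \calm_d$, multilinearity gives
\[Y(f) = \sum_S f[\vp_S]\prod_{i\in S} y_i = f(x_1 + y_1 p_1, \ldots, x_d + y_d p_d).\]
Each $x_i + y_i p_i$ vanishes at $x_i = p_i$, $y_i = -1$, so its coordinates lie in $\frakq$. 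Hence $Y([M]_\vp)$ is $[M]_\vp$ evaluated at these linear forms.

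The key step is a rank comparison via the associated graded ring. Pass to the local ring of $\cals$ at the prime $\frakq$ (or to the degree-zero part, using the invertible elements $x_{i,1}$ to dehomogenize as in the proofs above). There $\frakq$ is the maximal ideal of a regular local ring, so $\operatorname{gr}_\frakq$ is a polynomial ring and in particular a domain. The images of the coordinates of $x_i + y_i p_i$ in $\frakq/\frakq^2$ should be linearly independent, or at least algebraically independent in the graded ring. Consequently the image of $Y(M)$ in $\frakq^d/\frakq^{d+1}$, namely the image of $[M]_\vp(\vx + \vy\vp)$, is a matrix of degree-$d$ forms whose rank over the fraction field of $\operatorname{gr}_\frakq$ equals $r \coloneq \rank [M]_\vp$. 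To see this, choose an $r\times r$ minor of $[M]_\vp$ that is a nonzero polynomial; substituting algebraically independent elements keeps it nonzero. The corresponding minor of $Y(M)$ then has leading form equal to this nonzero element of $\frakq^{rd}/\frakq^{rd+1}$, so it is not in $\frakq^{rd+1}$ and in particular is nonzero. This is the multiplicity argument: a minor that vanished identically would vanish to infinite order. Hence $\rank Y(M) \geq r$, which finishes the proof.

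I expect the main obstacle to be the bookkeeping in this last step. One must check that the relevant localization is regular and that the elements $x_{i,j} + y_i p_{i,j}$ (after dehomogenizing, e.g.\ with $p_i = e_1$, the elements $z_{i,j}$ and $x_{i,1}+y_i$ suitably normalized) have independent images in $\frakq/\frakq^2$. This should follow from the explicit description of $\calr_{d,\vp}$ in the $z_i$ coordinates together with the free variables $y_i$. I would first verify it in the normalization $p_i = e_1$, where $\frakq$ is generated by the $z_{i,j}$ and the $y'_i = x_{i,1} + y_i$ used in the proof of \cref{lem:yprops}, and these form part of a regular system of parameters.
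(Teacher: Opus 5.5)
Your proof is correct and is essentially the paper's argument. You use the same chain $\sum_{S} \rank M[\vp_S] \ge \rank Y(M) \ge \rank [M]_\vp$, with \cref{lem:yprops} showing that an $r\times r$ minor of $Y(M)$ agrees modulo $\frakq^{rd+1}$ with the corresponding minor of $Y([M]_\vp)=[M]_\vp(\vx+\vy\vp)$, and the latter is not in $\frakq^{rd+1}$. The only difference is how you certify that last fact: you use the associated graded ring of the regular local ring $\cals_{\frakq}$, which does work, since for $p_i=e_1$ the elements $z_{i,j}$ and $x_{i,1}+y_i$ form a regular system of parameters there. The paper instead notes that every element of $\cals$ is regular at the point $x_i=p_i$, $y_i=-1$ and every generator of $\frakq$ vanishes there, so an element of $\frakq^{rd+1}$ vanishes to order at least $rd+1$, which a nonzero polynomial of total degree $rd$ cannot do; this avoids the bookkeeping you flagged.
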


\begin{proof}
We will show
\[\sum_{S \subseteq [d]} \rank M[\vp_S] \stackrel{(1)}{\geq} \rank Y(M) \stackrel{(2)}\geq \rank Y([M]_\vp) \stackrel{(3)}{=} \rank([M]_\vp).\]
The inequality (1) is immediate from the subadditivity of rank.

To show (2), let $r = \rank Y([M]_\vp)$. There exists an $r \times r$ submatrix of $Y([M]_\vp)$ with nonzero determinant. Since this determinant is a polynomial in the $x_i$ and $y_i$ of degree exactly $dr$, its order of vanishing at $x_i = p_i$ and $y_i = -1$ is at most $dr$, meaning that it does not lie in $\frakq^{dr+1}$. By \cref{lem:yprops}, every entry in $Y(M)$ is in $\frakq^d$ and every entry of $Y(M) - Y([M]_\vp)$ is in $\frakq^{d+1}$. As a result, the corresponding minor of $Y(M)$ is the same modulo $\frakq^{dr+1}$ and is thus nonzero. Hence $\rank Y(M) \geq r$.

To show (3), consider the injective homomorphism $Y' \colon \calr_d \to \cals$ given by sending $x_{i,j} \mapsto x_{i,j} + p_{i,j}y_{i}$. It is easy to see that this agrees with $Y$ on $\calm_d$, so
\[\rank([M]_\vp)  = \rank(Y'([M]_\vp)) = \rank(Y([M]_\vp)).\qedhere\]
\end{proof}

\section{Schur Complementation} \label{sec:sc}
\subsection{Schur complements over rings}
In this section we recall the notion of Schur complement from linear algebra and extend it to $R$-matrices.

Let $R$ be an integral $\setf$-algebra, and let $M$ be an $R$-matrix. 
If $A$ is an invertible (square) submatrix of $M$, we define the Schur complement $M/A$ as follows: if $A$ is the top-left submatrix of $M$ and
\[M = \begin{pmatrix} A & B \\ C & D \end{pmatrix},\]
we define
\[M/A = \begin{pmatrix} 0 & 0 \\ 0 & D - BA\inv C\end{pmatrix}.\]
If $A$ is not the top-left submatrix of $M$, there exist permutation matrices $P$ and $Q$ such that $A$ is the top-left submatrix of $PMQ$, and we let $M/A = P\inv(PMQ/A)Q\inv$. It is straightforward to show that this construction is independent of the choice of $P$ and $Q$.
\begin{rmk}
This notation differs somewhat from the standard approach to Schur complements (appearing in the introduction of this paper), where it is assumed that $A$ is the top-left submatrix, which allows the zero rows and columns to be omitted from the definition of $M/A$. This difference is purely cosmetic, allowing us to avoid assuming (without loss of generality) that $A$ is the top-left submatrix.
\end{rmk}
\begin{rmk} \label{rmk:scbi}
The above definition, while concrete, is perhaps unsatisfying as it does not explain how Schur complements behave under changes of basis, which the following coordinate-free approach aims to remedy. Given a bilinear map $B \colon \setf^a \times \setf^b \to R$ and $r$-dimensional subspaces $U \subseteq \setf^a$ and $V \subseteq \setf^b$, one can restrict $B$ to get a map $B' \colon U \times V \to R$. If $B'$ has unit determinant, there exist unique projection maps $P \colon \setf^a \to U \otimes R$ and $Q \colon \setf^b \to V \otimes R$ such that $\tilde B'(P(x), y) = B(x, y)$ for all $x \in \setf^a$ and $y \in V$ and $\tilde B'(x, Q(y)) = B(x, y)$ for all $x \in U$ and $y \in \setf^b$, where $\tilde B'\colon (U \otimes R) \times (V \otimes R) \to R$ is the unique $R$-bilinear extension of $B'$. Then, the Schur complement $B/B'$ is given by \[(B/B')(x,y) = B(x,y) - \tilde B'(P(x),Q(y)).\] 
In particular, the Schur complement of a bilinear form depends only on the choice of subspaces $U$ and $V$. This is not obvious from the block definition, as a na\"ive translation would entail a decomposition $\setf^a = U \oplus U'$ and $\setf^b = V \oplus V'$. However, the choices of $U'$ and $V'$ do not matter.
\end{rmk} 

If $M$ is an $R$-matrix and $A$ is an $r \times r$ invertible submatrix, the Schur complement obeys the following basic properties:
\begin{itemize}
\item If $\varphi\colon R \to S$ is a homomorphism, then $\varphi(M/A) = \varphi(M)/\varphi(A)$. In particular, the choice of base ring does not affect the Schur complement.
\item $\rank(M/A) = \rank(M) - r$. This comes from the identity
\[\begin{pmatrix} I & 0 \\ -CA\inv & I \end{pmatrix}\begin{pmatrix} A & B \\ C & D \end{pmatrix}\begin{pmatrix} I & -A\inv B \\ 0 & I \end{pmatrix} = \begin{pmatrix} A & 0 \\ 0 & D - CA\inv B \end{pmatrix},\]
where the right-hand side has rank given by $\rank (M/A) + r$.
\item There exist $R$-vectors $u_1,\ldots,u_r,v_1,\ldots,v_r$ such that $M - M/A = u_1\otimes v_1 + \cdots + u_r \otimes v_r$. This is because in the case where $A$ is the top-left submatrix, we have
\begin{equation} \label{eq:mma} M - M/A = \begin{pmatrix} A & B \\ C & CA\inv B \end{pmatrix} = \begin{pmatrix} A \\ C \end{pmatrix} A\inv \begin{pmatrix} A & B \end{pmatrix}.\end{equation}
\end{itemize}

\subsection{Differential Schur complements}
Let $M$ be an $\calm_d$-matrix, $A$ be an $r\times r$ submatrix of $M$, and $\vp \in (\setf^n)^d$ be such that $A(\vp)$ is invertible. Since $A(\vp)$ is invertible, the quantity $\det A$ is homogeneous of degree $(r,r,\ldots,r)$ in $\calr_d$ and nonvanishing at $\vp$, and is thus invertible in $\calr_{d,\vp}$. Therefore $A$ is invertible in $\calr_{d,\vp}$ and we may take the Schur complement $M/A$ in $\calr_{d,\vp}$. Since $AA\inv = I$ has degree zero, $A\inv$ is of degree $(-1,-1,\ldots,-1)$, implying that $M/A$ is an $\calm_{d,\vp}$-matrix.
\begin{defn} \label{def:dsc}
The \vocab{differential Schur complement} is the $\calm_d$-matrix $[M/A]_{\vp}$, the result of taking the Schur complement in the ring $\calm_{d, \vp}$, followed by applying the map $[-]_\vp$ entrywise.
\end{defn}

Applying \cref{lem:mppr,lem:mprank} yields the following results:
\begin{prop} \label{lem:mappr}
$\PR(M - [M/A]_\vp) \leq 2^d r$.
\end{prop}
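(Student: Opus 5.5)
The plan is to write $M - [M/A]_\vp$ as the approximation of a sum of $r$ outer products and then apply \cref{lem:mppr} to each term. Since every entry of $M$ lies in $\calm_d$, the remark after \cref{def:approx} gives $[M]_\vp = M$. The map $[-]_\vp$ is $\setf$-linear and is applied entrywise, so
\[M - [M/A]_\vp = [M]_\vp - [M/A]_\vp = [M - M/A]_\vp.\]
So it is enough to decompose $M - M/A$, computed over $\calr_{d,\vp}$, and then approximate each piece.

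Permute rows and columns so that $A$ is the top-left block; this is harmless because permutations commute with $[-]_\vp$ and preserve $\PR$. By \eqref{eq:mma},
\[M - M/A = \begin{pmatrix} A \\ C \end{pmatrix} A\inv \begin{pmatrix} A & B \end{pmatrix},\]
where $A\inv$ has entries in $\calr_{d,\vp}$ of degree $(-1,\ldots,-1)$. Expanding the product gives
\[M - M/A = \sum_{k,l=1}^r (A\inv)_{kl}\, c_k \otimes b_l,\]
where $c_k$ is the $k$th column of $\binom{A}{C}$ and $b_l$ is the $l$th row of $(A\ B)$. This is $r^2$ terms, which is too many, so the terms must be grouped into $r$ outer products. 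Set $u_k = c_k$, which is a homogeneous $\calr_{d,\vp}$-vector of degree $(1,\ldots,1)$. Set $v_k = \sum_l (A\inv)_{kl} b_l$, which is the $k$th row of $A\inv(A\ B)$. Each $v_k$ is homogeneous of degree $(0,\ldots,0)$, because every entry is a sum of products of a degree $(-1,\ldots,-1)$ element with a degree $(1,\ldots,1)$ element. Hence $M - M/A = \sum_{k=1}^r u_k \otimes v_k$, and each $u_k \otimes v_k$ is an $\calm_{d,\vp}$-matrix.

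Applying $[-]_\vp$ termwise, using linearity, gives
\[M - [M/A]_\vp = \sum_{k=1}^r [u_k \otimes v_k]_\vp.\]
By \cref{lem:mppr}, each summand has partition rank at most $2^d$. Partition rank is subadditive, since decompositions can simply be concatenated, so the total is at most $2^d r$.

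The only step needing care is that the hypotheses of \cref{lem:mppr} hold. This means checking that $u_k$ and $v_k$ are homogeneous, so that the entries of $u_k\otimes v_k$ lie in the degree-$(1,\ldots,1)$ component. That holds by the degree bookkeeping above, which uses the invertibility of $\det A$ in $\calr_{d,\vp}$ established before \cref{def:dsc}.
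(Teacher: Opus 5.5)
Your proof is correct and follows the paper's argument. Like the paper, you rewrite $M - [M/A]_\vp$ as $[M - M/A]_\vp$, use \eqref{eq:mma} to split $M - M/A$ into $r$ outer products of homogeneous $\calr_{d,\vp}$-vectors, and then apply \cref{lem:mppr} to each term together with subadditivity of $\PR$. Your explicit choice of $u_k$ as the $k$th column of $\binom{A}{C}$ and $v_k$ as the $k$th row of $A^{-1}(A\ B)$ just spells out the homogeneity bookkeeping that the paper leaves implicit.
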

\begin{proof}
By \labelcref{eq:mma}, we may write
\[M - M/A = \sum_{i=1}^r u_i \otimes v_i\]
where for each $i \in [r]$, $u$ and $v$ are homogeneous $\calr_{d,\vp}$-vectors such that $u \otimes v$ is an $\calm_{d,\vp}$-matrix. Applying \cref{lem:mppr}, we conclude that
\[\PR(M - [M/A]_\vp) = \PR([M - M/A]_\vp) = \PR\paren*{\sum_{i=1}^r [u_i \otimes v_i]_\vp} \leq \sum_{i=1}^r \PR([u_i \otimes v_i]_\vp) \leq 2^d r.\qedhere.\]
\end{proof}

\begin{prop} \label{lem:maprank}
$\rank([M/A]_\vp) \leq \sum_{S \subseteq [d]} (\rank M[\vp_S] - r)$.
\end{prop}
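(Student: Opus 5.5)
We apply \cref{lem:mprank} to the $\calm_{d,\vp}$-matrix $M/A$. This gives
\[\rank([M/A]_\vp) \leq \sum_{S \subseteq [d]} \rank\paren[\big]{(M/A)[\vp_S]},\]
so it suffices to show that $\rank\paren[\big]{(M/A)[\vp_S]} = \rank M[\vp_S] - r$ for each $S \subseteq [d]$.

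To compute $(M/A)[\vp_S]$, we want to treat partial evaluation as a ring homomorphism and use the first listed property of Schur complements, $\varphi(M/A) = \varphi(M)/\varphi(A)$. Concretely, let $\varphi_S$ send $x_i \mapsto p_i$ for $i \in S$. We need $\varphi_S$ to be a well-defined homomorphism from $\calr_{d,\vp}$ to a suitable ring, namely the localization $\calr_{[d]\setminus S,\vp_{[d]\setminus S}}$ of rational functions in the remaining variables defined at $\vp_{[d]\setminus S}$. This requires that every homogeneous denominator $B$ not vanishing at $\vp$ maps to something still invertible in the target. That holds because $B[\vp_S]$ is a polynomial in the remaining variables whose value at $\vp_{[d]\setminus S}$ equals $B(\vp) \neq 0$. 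The map $\varphi_S$ agrees with the paper's partial evaluation $f \mapsto f[\vp_S]$ on $\calm_{d,\vp}$. It also sends $\det A$ to $\det A[\vp_S]$, which is nonzero at $\vp_{[d]\setminus S}$, so $A[\vp_S]$ is invertible in the target ring. Hence
\[(M/A)[\vp_S] = M[\vp_S]/A[\vp_S].\]

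Next we apply the second property, $\rank(N/A') = \rank N - r$ for an invertible $r\times r$ submatrix $A'$, over the target integral domain. This gives
\[\rank\paren[\big]{(M/A)[\vp_S]} = \rank M[\vp_S] - r.\]
Summing over $S \subseteq [d]$ completes the proof.

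The edge cases are handled by the same argument. When $S=[d]$ the target is just $\setf$, and the step reduces to ordinary Schur complements of the scalar matrix $M(\vp)$ with $A(\vp)$ invertible. When $S=\emptyset$ the map $\varphi_S$ is the identity. The only real care needed is the well-definedness of $\varphi_S$ on the localization and its compatibility with the notation $f[\vp_S]$ for rational $f$ as used in \cref{lem:mprank}; this is the main, though mild, obstacle.
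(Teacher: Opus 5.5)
Your proof is correct and follows the paper's argument exactly. You apply \cref{lem:mprank} to the $\calm_{d,\vp}$-matrix $M/A$, use that partial evaluation is a ring homomorphism to get $(M/A)[\vp_S] = M[\vp_S]/A[\vp_S]$, and conclude with $\rank(N/A') = \rank N - r$. Your extra check that $f \mapsto f[\vp_S]$ is well defined on the localization $\calr_{d,\vp}$ and keeps $A[\vp_S]$ invertible is a detail the paper leaves implicit, and you handle it correctly.
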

\begin{proof}
Since partial evaluation is a homomorphism, we have
\[(M/A)[\vp_S] = M[\vp_S]/A[\vp_S],\]
which has a rank of $\rank M[\vp_S] - r$. Thus, by \cref{lem:mprank}, we have
\[\rank([M/A]_\vp) \leq \sum_{S \subseteq [d]} (M/A)[\vp_S] = \sum_{S \subseteq [d]} (\rank M[\vp_S] - r).\qedhere\]
\end{proof}

\subsection{Examples}
For each of the examples in this section, $\vp=(e_1,\ldots,e_1)$, 
and $\alpha_i$, $\beta_i$, and $\gamma_i$ be abbreviations for $x_{1,i}, x_{2,i}$, and $x_{3,i}$, respectively.
\begin{examp}[$d = 1$]
Suppose $M$ is an $\calm_1$-matrix such that there exist compatible block decompositions
\[M = \begin{pmatrix} A & B \\ C & D \end{pmatrix}\quad\text{and}\quad M(e_1) = \begin{pmatrix} I & 0 \\ 0 & 0 \end{pmatrix}.\]
Then, since $B(e_1)$ and $C(e_1)$ are zero, every entry in $CA\inv B$ is in $\frakp_1^2$ and thus is approximated by $0$. It follows that $[M/A]_\vp$ is very simple:
\[[M/A]_\vp = \begin{pmatrix}
0 & 0 \\
0 & D
\end{pmatrix}.\]
\end{examp}
\begin{examp}[$d = 2$]
Suppose $M$ is an $\calm_2$-matrix such that
\[M = \begin{pmatrix} A & B \\ C & D \end{pmatrix}\quad\text{and}\quad M(e_1,e_1)= \begin{pmatrix} I & 0 \\ 0 & 0 \end{pmatrix}.\]
Since $B$ and $C$ contain no $\alpha_1\beta_1$ terms, we may write $B = \beta_1 B_1 + \alpha_1 B_2 + B_{12}$ and $C = \beta_1C_1 + \alpha_1 C_2 + C_{12}$, where $B_1$ and $C_1$ are $\calm_{\set{1}}$-matrices with no $\alpha_1$ terms, $B_2$ and $C_2$ are $\calm_{\set{2}}$-matrices with no $\beta_1$ terms, and $B_{12}$ and $C_{12}$ are $\calm_{\set{1,2}}$-matrices with no $\alpha_1$ or $\beta_1$ terms. The only terms in the expansion of $CA\inv B$ that do not immediately go to zero after approximation are $\alpha_1\beta_1 C_1A\inv B_2 + \alpha_1\beta_1 C_2A\inv B_1$.

Note that if $E$ is a matrix of degree $(0,\ldots,0)$ with entries in $\frakp_1 + \frakp_2$, we have $[C_1 E B_2]_\vp = 0$, since all entries in $C_1$ are in $\frakp_1$ and all entries in $B_2$  are in $\frakp_2$. Moreover, $\alpha_1\beta_1 A\inv - I$ is such a matrix, as it vanishes at $(e_1,e_1)$. Hence, $[\alpha_1\beta_1 C_1A\inv B_2]_\vp = [C_1IB_2]_\vp = C_1B_2$. 
Similarly, $[\alpha_1\beta_1 C_2A\inv B_1]_\vp = C_2B_1$. Therefore
\[[M/A]_\vp = \begin{pmatrix}
0 & 0 \\
0 & D - C_1B_2 - C_2B_1
\end{pmatrix}.\]
\end{examp}

While computing $[CA^{-1} B]_\vp$ for $d=2$ can be reduced to computing $[CB/(\alpha_1\beta_1)]_\vp = C_1B_2 + C_2B_1$ (a product rule of sorts), for $d\ge 3$ the situation is more complicated.

\begin{examp}[$d = 3$]
Suppose $M$ is an $\calm_3$-matrix such that
\[M = \begin{pmatrix} A & B \\ C & D \end{pmatrix}\quad\text{and}\quad M(e_1,e_1,e_1) = \begin{pmatrix} I & 0 \\ 0 & 0 \end{pmatrix}.\]
If we follow the same procedure the $d = 2$ case, we will eventually wish to compute terms such as $[\alpha_1\beta_1\gamma_1^2 C_1 A\inv B_2]_\vp$ for $C_1$ an $\calm_{\set{1}}$-matrix and $B_2$ an $\calm_{\set{2}}$-matrix. However, this depends nontrivially on $A\inv$, as, say, a $\gamma_2/\gamma_1$ term affects the resulting approximation. While it is possible to write down an explicit formula for $[M/A]_\vp$, using a power series expansion of $A\inv$, it is sufficiently complicated that we will stop here.
\end{examp}

\begin{examp} \label{ex:tight}
The bounds in \cref{lem:mappr,lem:maprank} are tight. For notational simplicity we will work in the case $d = 2$, but a straightforward generalization of the following example exists for all $d$.

Let $M$ be the $5 \times 5$ $\calm_2$-matrix given by
\[M = \begin{pmatrix}
\alpha_1\beta_1 & \alpha_1\beta_1 & \alpha_1\beta_2 & \alpha_2\beta_1 & \alpha_2\beta_2 \\
\alpha_1\beta_1 & 0 & 0 & 0 & 0 \\
\alpha_1\beta_2 & 0 & 0 & 0 & 0 \\
\alpha_2\beta_1 & 0 & 0 & 0 & 0 \\
\alpha_2\beta_2 & 0 & 0 & 0 & 0 
\end{pmatrix}.\]
If we let $A$ be the $1 \times 1$ top-left submatrix of $M$, we may compute
\[[M/A]_\vp = \begin{pmatrix}
0 & 0 & 0 & 0 & 0 \\
0 & -\alpha_1\beta_1 & -\alpha_1\beta_2 & -\alpha_2\beta_1 & -\alpha_2\beta_2 \\
0 & -\alpha_1\beta_2 & 0 & -\alpha_2\beta_2 & 0 \\
0 & -\alpha_2\beta_1 & -\alpha_2\beta_2 & 0 & 0 \\
0 & -\alpha_2\beta_2 & 0 & 0 & 0 
\end{pmatrix}.\]
It is clear that $\rank ([M/A]_\vp) = 4$. Moreover, $\PR(M - [M/A]_\vp) = 4$ as well; indeed, the first two rows of $M-[M/A]_\vp$ are identical, so its partition rank 
is at most $4$,
and moreover, its $4 \times 4$ bottom-right submatrix has rank---and thus partition rank---at least $4$.
This matches the bounds in \cref{lem:mappr,lem:maprank}. In particular, this means that \cref{lem:mppr,lem:mprank} are also tight.
\end{examp}

\section{Proof of \texorpdfstring{\cref{thm:main}}{Theorem \ref{thm:main}}} \label{sec:mainproof}

\subsection{Polynomial methods}
For a polynomial $f(x) = f(x_1,\ldots,x_n)$ over $\setf$ and a point $p \in \setf^n$,  
let $\mult(f,p)=\mindeg f(x+p)$, the minimum degree of any monomial in the support of $f(x+p)$.
Note that $f(p)=0$ if and only if $\mult(f,p) \ge 1$.
As a simple example,
\[\mult(x_1x_2,(p_1,p_2)) 
= \mindeg((x_1+p_1)(x_2+p_2))
= \begin{cases} 2 & (p_1,p_2)=(0,0)\\ 1& (p_1,p_2) \neq (0,0), p_1p_2 = 0\\ 0 & p_1p_2 \neq 0 \end{cases}.\]

The following is a generalization of the Schwartz-Zippel lemma, appearing in \cite{DKSS13}. We include an elementary proof for completeness.
\begin{lem}[\cite{DKSS13}]\label{lem:multSZ}
If $f \in \setf[x_1,\ldots,x_n]$ is nonzero of total degree $d$ and $S \subseteq \setf$ is a finite subset, then
\[\sum_{p \in S^n} \mult(f, p) \leq d\cdot \abs{S}^{n-1}.\]
\end{lem}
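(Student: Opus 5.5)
I will prove the bound by induction on the number of variables $n$, following the standard Schwartz--Zippel induction but keeping track of multiplicities. For $n=1$, a nonzero univariate polynomial of degree $d$ has $\sum_{p\in\setf}\mult(f,p)\le d$: each point $p$ with $\mult(f,p)=m$ contributes a factor $(x-p)^m$, and these factors are coprime for distinct $p$. Summing over $p\in S$ therefore gives at most $d=d\abs{S}^0$.

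For the inductive step, I write $f=\sum_{i=0}^{k} g_i(x_1,\ldots,x_{n-1})x_n^i$ with $g_k\neq 0$. Then $\deg g_k\le d-k$. Fix $p'=(p_1,\ldots,p_{n-1})\in S^{n-1}$ and let $m'=\mult(g_k,p')$. The key claim is
\[\sum_{t\in S}\mult(f,(p',t))\le m'\abs{S}+k.\]
To see this, translate $p'$ to the origin and look at the lowest-degree part in $x_1,\ldots,x_{n-1}$. Expand $f(x'+p',x_n)$ as a polynomial in $x_n$ with coefficients in $\setf[x']$. Let $h(x',x_n)$ be the component of $f(x'+p',x_n)$ that is homogeneous of degree $m'$ in $x'$, after collecting the degree-$m'$ parts of all the coefficients of $x_n^i$. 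Its $x_n^k$ coefficient is the lowest-degree form of $g_k(x'+p')$, which is nonzero, so $h$ is nonzero with $x_n$-degree exactly $k$.

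Now choose a monomial $\mu$ in $x'$ of degree $m'$ whose coefficient in $h$, viewed as a polynomial $c_\mu(x_n)$, has degree $k$ in $x_n$. Any monomial of $f(x'+p',x_n+t)$ whose $x'$-part is $\mu$ has total degree $m'+j$, where $j$ is the $x_n$-degree, and the $x_n$-part comes from $c_\mu(x_n+t)$. Hence $\mult(f,(p',t))\le m'+\mult(c_\mu,t)$, giving
\[\sum_{t\in S}\mult(f,(p',t))\le m'\abs{S}+\sum_{t}\mult(c_\mu,t)\le m'\abs{S}+k\]
by the univariate case.

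Summing over $p'\in S^{n-1}$ and applying the inductive hypothesis to $g_k$ gives
\[\sum_{p\in S^n}\mult(f,p)\le \abs{S}\cdot(d-k)\abs{S}^{n-2}+k\abs{S}^{n-1}=d\abs{S}^{n-1}.\]

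The main obstacle is the inequality $\mult(f,(p',t))\le m'+\mult(c_\mu,t)$. Multiplicity is not monotone under taking coefficients in general, so this step needs care: I have to pass to the lowest-degree part in $x'$ only, and use that translating in $x_n$ does not change the $x'$-degree.
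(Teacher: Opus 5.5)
Your proof is correct, but it takes a different route from the paper's.

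\emph{Your route.} You induct on $n$ alone and use the leading coefficient $g_k$ in $x_n$, as in the classical leading-coefficient proof of Schwartz--Zippel. The core is the fiberwise bound $\sum_{t\in S}\mult(f,(p',t))\le \abs{S}\mult(g_k,p')+k$. You prove it by picking a monomial $\mu$ of degree $m'=\mult(g_k,p')$ in the support of $g_k(x'+p')$, so that its coefficient $c_\mu(x_n)$ has $x_n$-degree exactly $k$. The shift $x_n\mapsto x_n+t$ leaves the $x'$-monomial $\mu$ untouched, which gives $\mult(f,(p',t))\le m'+\mult(c_\mu,t)$. Univariate root counting for $c_\mu$ then finishes the fiber, and induction applied to $g_k$ (of degree at most $d-k$) finishes the sum.

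\emph{The paper's route.} It does a double induction on $n$ and $\deg f$, splitting on divisibility.
\begin{itemize}
\item If no $x_n-s$ with $s\in S$ divides $f$, each restriction $f(-,s)$ is nonzero. Restricting can only decrease multiplicity, so each hyperplane $x_n=s$ contributes at most $d\abs{S}^{n-2}$.
\item If some $x_n-s$ divides $f$, the paper peels off that factor using additivity $\mult(fg,p)=\mult(f,p)+\mult(g,p)$. The linear factor contributes exactly $\abs{S}^{n-1}$, and induction on degree handles the rest.
\end{itemize}

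\emph{Comparison.} The paper's argument needs only these two soft facts and no choice of monomials or lowest-degree forms. Yours needs a bit more bookkeeping, but it yields a sharper intermediate statement: a bound on each line $\{p'\}\times S$ in terms of the multiplicity of the leading coefficient at $p'$. A minor point: your base case is $n=1$, whereas the paper starts at $n=0$, which is trivial anyway.
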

\begin{proof}
We apply a double induction on $n$ and $\deg f$. The $n = 0$ case is clear as $f$ is constant and the left-hand side is zero. For some $n > 0$, the $d = 0$ case is obvious. If $d > 0$, then first suppose that $f$ is not divisible by $x_n - s$ for any $s \in S$. Then, we have
\[\sum_{p \in S^n} \mult(f, p) = \sum_{s \in S} \sum_{p' \in S^{n-1}} \mult(f, (p', s)) \leq \sum_{s \in S} \sum_{p' \in S^{n-1}} \mult(f(-, s), p') \leq \sum_{s \in S} d \cdot \abs{S}^{n-2} = d \cdot \abs{S}^{n-1},\]
where we have used the inductive hypothesis on the nonzero polynomials $f(-, s)$ for $s \in S$ (which have total degree at most $d$).

Otherwise, we have $f = (x_n - s)g$ for some $g$. In this case, we have
\[\sum_{p \in S^n} \mult(f, p) = \sum_{p \in S^n} \mult(x_n - s, p) + \mult(g, p) \leq \abs{S}^{n-1} + (d-1) \cdot \abs{S}^{n-1} = d \cdot \abs{S}^{n-1},\]
where we have used the inductive hypothesis on $g$.
\end{proof}

We are now ready to prove the first half of \cref{thm:main}.
\begin{lem} \label{lem:ercrint}
Suppose $\setf = \setf_q$ and $M$ is an $\calm_d$-matrix for some $d \geq 1$. Then
\[\sete_{p_d \in \setf^n} \CR(M[p_d]) \geq \paren*{1 - \frac{1}{q}}\CR(M).\]
\end{lem}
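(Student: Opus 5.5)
Let $r = \CR(M)$ and fix an $r\times r$ submatrix $A$ of $M$ whose determinant $f = \det A \in \calr_d$ is a nonzero polynomial, homogeneous of degree $r$ in each $x_i$. The plan is to relate $\CR(M[p_d])$ to the order of vanishing of $f$ along the slice $x_d = p_d$, and then sum over $p_d \in \setf^n$ using \cref{lem:multSZ} in the variables $x_d$.

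First view $f$ as a polynomial in $x_d$ with coefficients in $K = \overline{\setf(x_1,\ldots,x_{d-1})}$, of total degree exactly $r$ in $x_d$. The pointwise claim is
\[\CR(M[p_d]) \geq r - \mult(f, p_d),\]
where $\mult$ is taken with respect to the $x_d$ variables only. To see this, set $s = \CR(M[p_d]) \ge \rank A[p_d]$, so $A[p_d]$ has rank at most $s$ over $\setf(x_1,\ldots,x_{d-1})$. After row and column operations over that field we may assume the last $r-s$ rows of $A[p_d]$ vanish identically. Then in $A(x_d + p_d)$, as a matrix over $K[x_d]$, those $r-s$ rows have entries of degree at least $1$ in $x_d$ (the entries are linear in $x_d$, with constant term given by the $p_d$-evaluation). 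Expanding the determinant, every term has degree at least $r-s$ in $x_d$, so $\mult(f,p_d) \ge r-s$. Equivalently, the rank of a matrix of polynomials drops at a point by at most the multiplicity of any minor there.

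Next, \cref{lem:multSZ} with $S = \setf_q$, applied to $f$ as a nonzero polynomial of total degree $r$ in the $n$ variables $x_d$ over the field $K$, gives $\sum_{p_d \in \setf_q^n} \mult(f,p_d) \le r q^{n-1}$. The lemma is stated over an arbitrary field $\setf$, and the grid is a finite subset of the coefficient field, so this is legitimate. Averaging the pointwise bound then yields
\[\sete_{p_d}\CR(M[p_d]) \ge r - \frac{r q^{n-1}}{q^n} = \paren*{1-\frac1q}r,\]
which is the statement.

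The main subtlety is the pointwise claim. One must ensure the multiplicity is measured only in $x_d$, with coefficients in the function field of the other variables, and that taking $\CR$ of $M[p_d]$ (rank over $\setf(x_1,\ldots,x_{d-1})$) matches this setup. Since $\rank A[p_d] \le \CR(M[p_d])$, it suffices to use $A$. For $d=1$ the coefficient field is simply $\setf$, and the argument goes through unchanged.
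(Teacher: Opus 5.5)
Your proof is correct and follows essentially the same route as the paper. Both arguments fix a nonvanishing $r\times r$ minor, lower-bound its multiplicity in $x_d$ at $p_d$ by $r-\CR(M[p_d])$, and then sum over $p_d$ using \cref{lem:multSZ}. The differences are cosmetic: the paper extracts a single nonzero coefficient $f_m(x_d)$ of a monomial in $x_1,\ldots,x_{d-1}$ so that it can apply \cref{lem:multSZ} over $\setf$ itself, and it proves the pointwise bound by expanding $\det(M[x_d]+M[p_d])$ multilinearly into mixed matrices, whereas you apply the lemma over the function field and prove the pointwise bound by row reduction.
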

\begin{proof}
Let $\CR(M) = r$. Replacing $M$ with an $r \times r$ submatrix of full commutative rank does not change the right-hand side, while the left-hand side cannot increase. Thus we may assume $M$ is $r \times r$.

The polynomial $\det M$ is nonzero and is homogeneous of degree $(r,r,\ldots,r)$. In particular, for every monomial $m=m(x_1,\ldots,x_{d-1})$ in $\calr_{d-1}$ of degree $(r,r,\ldots,r)$, there is some degree-$r$ polynomial $f_m(x_d) \in \calr_{\set{d}}$ such that $\det M = \sum_m f_m \cdot m$.
Pick some $m$ such that $f_m$ is nonzero.

Let $p_d \in \setf^n$.
Note that by the multilinearity of the determinant we have
\[\det M[x_d + p_d] = \det (M[x_d] + M[p_d]) = \sum_{S \subseteq [r]} \det M_S,\]
where $M_S$ is the ``mixed matrix'' where the rows indexed by $S$ come from $M[p_d]$ and the rest come from $M[x_d]$. If $\abs{S} > \CR(M[p_d])$, then the rows from $M[p_d]$ are linearly dependent, so $\det M_S = 0$. As a result, every monomial in $\det M[x_d + p_d]$ has degree at least $r - \CR(M[p_d])$ in $x_d$, 
meaning that $\mult(f_m, p) \geq r - \CR(M[p_d])$. 

Applying \cref{lem:multSZ} with $f_m$, which is nonzero of degree $r$, and with $S = \setf$, yields
\[r - \sete_{p_d}\CR(M[p_d]) \leq \sete_{p_d} [\mult(f_m, p_d)] \leq r/q,\]
so $\sete_{p_d}\CR(M[p_d]) \geq (1-1/q)r$, as desired.
\end{proof}
\begin{cor} \label{lem:ercrnew}
Suppose $\setf = \setf_q$ and $M$ is an $\calm_d$-matrix. Then
\[\MaxR(M) \geq \sete_{\vp \in (\setf^n)^d} [\rank M(\vp)] \geq \paren*{1-\frac{1}{q}}^d\CR(M).\]
\end{cor}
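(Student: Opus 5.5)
The first inequality $\MaxR(M) \geq \sete_{\vp}[\rank M(\vp)]$ holds because a maximum is at least an average. Here $\rank M(\vp)$ is the ordinary rank of a scalar matrix, which equals $\CR$ of $M(\vp)$ viewed as an $\calm_0$-matrix. All the work is in the second inequality. I will prove it by induction on $d$, peeling off one variable block at a time with \cref{lem:ercrint}.

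More precisely, I will prove by induction on $d\ge 0$ that for every $\calm_d$-matrix $M$,
\[\sete_{\vp \in (\setf^n)^d} [\rank M(\vp)] \geq \paren*{1-\frac1q}^d \CR(M).\]
For $d=0$ this is trivial: $M$ is a scalar matrix and $\CR(M)=\rank M$.

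For $d\ge 1$, fix $p_d\in\setf^n$ and consider the partial evaluation $M[p_d]$. Each entry of $M$ is multilinear in $x_1,\ldots,x_d$, so setting $x_d=p_d$ leaves entries that are multilinear in $x_1,\ldots,x_{d-1}$. Hence $M[p_d]$ is an $\calm_{d-1}$-matrix, and evaluating it at $\vp_{[d-1]}$ gives exactly $M(\vp)$. Applying the inductive hypothesis to $M[p_d]$ gives
\[\sete_{\vp_{[d-1]}}[\rank M(\vp)] \ge (1-1/q)^{d-1}\CR(M[p_d]).\]
I then average over $p_d$ (tower property of expectation) and use \cref{lem:ercrint}, which gives $\sete_{p_d}\CR(M[p_d])\ge(1-1/q)\CR(M)$. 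Chaining the two bounds yields the factor $(1-1/q)^d$.

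There is no real obstacle. The only points to check are that a partial evaluation of an $\calm_d$-matrix is an $\calm_{d-1}$-matrix, so the induction applies, and that \cref{lem:ercrint} is stated for $d\ge1$, which is exactly the range of the inductive step.
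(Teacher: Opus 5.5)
Your proof is correct and matches the paper's argument: the paper also applies \cref{lem:ercrint} once per variable block and chains the resulting bounds through iterated expectations, writing it as an explicit chain of inequalities rather than a formal induction on $d$. The only difference is the order in which the blocks are evaluated, and this does not matter because the blocks play symmetric roles.
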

\begin{proof}
The left inequality is trivial as $\MaxR(M) \geq \rank M(\vp)$ always. To prove the right inequality, repeatedly apply \cref{lem:ercrint} to get
\begin{multline*} \sete_{\vp_{[d]}} [\CR(M[\vp_{[d]}])] \geq \paren*{1-\frac{1}{q}} \sete_{\vp_{[d-1]}}[\CR(M[\vp_{[d-1]}])] \geq \cdots \\ \geq \paren*{1-\frac{1}{q}}^{d-1} \sete_{\vp_{[1]}}[\CR(M[\vp_{[1]}])] \geq \paren*{1-\frac{1}{q}}^d \CR(M). \end{multline*}
The leftmost quantity is simply $\sete_{\vp}[\rank M(\vp)]$, so we are done.
\end{proof}
\begin{rmk}
If $d > 0$ and $M \neq 0$, we in fact have $\MaxR(M) > (1-1/q)^d\CR(M)$. This is because $\rank M(0) = 0 < \MaxR(M)$, which means that $\rank M(\vp)$ is not a constant random variable and thus $\MaxR(M) > \sete_{\vp}[\rank M(\vp)]$.
\end{rmk}
\begin{prop} \label{prop:tight}
The constant $(1-1/q)^d$ cannot be improved.
\end{prop}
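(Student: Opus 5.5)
The plan is to exhibit, for each $d$ and each finite field $\setf_q$, a family of $\calm_d$-matrices whose ratio $\MaxR(M)/\CR(M)$ tends to $(1-1/q)^d$ as $n\to\infty$. By the remark preceding the proposition, this ratio always strictly exceeds $(1-1/q)^d$ when $M\neq 0$, so the constant can only be shown tight in the limit. The construction will be a diagonal matrix of products of linear forms, generalizing the $\setf_2$ example from the introduction (the diagonal matrix with entries $\alpha,\beta,\alpha+\beta$).

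\textbf{Construction.} For each $d$-tuple $(a_1,\ldots,a_d)$ of nonzero vectors $a_i\in\setf_q^n$, form the monomial-like entry $\prod_{i=1}^d \langle a_i, x_i\rangle \in \calm_d$. Let $M$ be the diagonal matrix whose diagonal entries are these products, one for each tuple. Thus $M$ has size $N\times N$ with $N=(q^n-1)^d$.

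\textbf{Commutative rank.} Every diagonal entry is a nonzero polynomial. Hence $\det M$ is a nonzero polynomial and $\CR(M)=N=(q^n-1)^d$.

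\textbf{Max-rank.} For any point $\vp=(p_1,\ldots,p_d)$, the rank of $M(\vp)$ is the number of diagonal entries that do not vanish at $\vp$. This count is $\prod_i \#\{a_i\neq 0:\langle a_i,p_i\rangle\neq 0\}$.
\begin{itemize}
\item If some $p_i=0$, the count is $0$.
\item Otherwise each factor equals $q^n-q^{n-1}$, since the forms $a\mapsto\langle a,p_i\rangle$ vanish on a hyperplane of $q^{n-1}$ vectors, which contains $a=0$.
\end{itemize}
So $\MaxR(M)=(q^n-q^{n-1})^d$. This gives
\[\frac{\MaxR(M)}{\CR(M)}=\paren*{\frac{q^n-q^{n-1}}{q^n-1}}^d \longrightarrow \paren*{1-\frac1q}^d \quad (n\to\infty).\]
Since $n$ is an unbounded parameter, no constant $c>(1-1/q)^d$ can satisfy $\MaxR(M)\ge c\,\CR(M)$ for all $M$.

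\textbf{Main step.} The only step requiring care is the exact max-rank count. This amounts to the elementary observation that a nonzero point $p$ is annihilated by exactly a hyperplane's worth of the vectors $a$, and that the rank of a diagonal matrix is the number of its nonzero entries. For infinite $\setf$ the constant is $1$, which is trivially optimal since $\MaxR=\CR$ there.
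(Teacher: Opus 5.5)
Your proposal is correct and is essentially the paper's construction. Your diagonal matrix is exactly the Kronecker product $M(x_1)\boxtimes\cdots\boxtimes M(x_d)$ of the $d=1$ diagonal examples, with two cosmetic differences: you keep all nonzero vectors instead of one per scaling class, which multiplies both ranks by $(q-1)^d$ and leaves the ratio unchanged, and you count the nonvanishing diagonal entries directly instead of invoking multiplicativity of $\MaxR$ and $\CR$ under $\boxtimes$.
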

\begin{proof}
In the case $d = 1$, we may construct for any $k\ge 1$ the diagonal matrix $M_k$ of size $(q^k-1)/(q-1)$ whose diagonal entries consist of all the nonzero linear forms in $x_{1,1},\ldots,x_{1,k}$ up to scaling. Then $\CR(M_k) = (q^k-1)/(q-1)$. Moreover, observe that, 
\[\rank M_k(p) = \begin{cases} q^{k-1} & (p_1,\ldots,p_k) \neq 0 \\ 0 & \text{else} \end{cases} ,\]
so $\MaxR(M_k) = q^{k-1}$. Thus,
$\MaxR(M_k)/\CR(M_k) = (1 - 1/q^k)/(1 - 1/q)$, 
so $\lim_{k \to \infty} \MaxR(M_k)/\CR(M_k) = 1-1/q$.

For $d \ge 2$, consider the $\calm_d$-matrix $M_{k,d} = M_k(x_1) \boxtimes \cdots \boxtimes M_k(x_d)$, where $\boxtimes$ denotes the Kronecker product of matrices.
Then $\MaxR(M_{k,d}) = \MaxR(M_k)^d$ and $\CR(M_{k,d}) = \CR(M_k)^d$, so $\lim_{k \to \infty} \MaxR(M_{k,d})/\CR(M_{k,d}) = (1-1/q)^d$.
\end{proof}

\begin{lem} \label{lem:prcrl}
Let $M$ be an $\calm_d$-matrix. If $\setf$ is infinite, then $\PR(M) \leq 2^d \CR(M)$. If $\setf = \setf_q$, then
\[\PR(M) \leq \frac{2^d(q-1)^d}{(2^d-1)(q-1)^d-(2^d-2)q^d} \CR(M),\]
provided the denominator is positive (which is true for sufficiently large $q$ in terms of $d$).
\end{lem}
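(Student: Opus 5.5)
We argue by strong induction on $\CR(M)$, iterating the differential Schur complement. The case $\CR(M)=0$ is trivial, since then $M=0$. Otherwise, let $r=\CR(M)$. We pick $\vp\in(\setf^n)^d$ and an $r\times r$ submatrix $A$ with $A(\vp)$ invertible, so that $\rank M(\vp)=r$. This is possible when $\setf$ is infinite; over $\setf_q$ we will instead take a $\vp$ of large rank, as described below. We then set $M'=[M/A]_\vp$. By \cref{lem:mappr}, $\PR(M-M')\le 2^d r$. By \cref{lem:maprank},
\[\CR(M')\le \sum_{S\subseteq[d]}\paren*{\rank M[\vp_S]-r}.\]
For $S=[d]$ the summand is $\rank M(\vp)-r=0$, and for $S=\emptyset$ it is $\CR(M)-r=0$. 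Each intermediate summand is at most $\CR(M)-r=0$, because $\rank M[\vp_S]\le \CR(M)$ (a specialization cannot increase rank). So in the infinite-field case $M'$ has commutative rank $0$, i.e.\ $M'=0$, and $\PR(M)\le 2^d\CR(M)$ in one step.

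Over $\setf_q$ we cannot in general achieve $\rank M(\vp)=\CR(M)$, so we take $r=\rank M(\vp)$ with $\vp$ maximizing the rank, i.e.\ $r=\MaxR(M)$. Then $A(\vp)$ invertible for a suitable $r\times r$ submatrix $A$. The $S=[d]$ summand vanishes. Writing $c=\CR(M)$, the remaining summands are each at most $c-r$, and there are $2^d-1$ of them, giving
\[\CR(M')\le (2^d-1)(c-r).\]
By \cref{lem:ercrnew}, $r\ge(1-1/q)^d c=\lambda c$, so $\CR(M')\le (2^d-1)(1-\lambda)c$. This is not enough for a useful geometric series when $d$ is large. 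We therefore refine the bound: the $S=\emptyset$ summand is $c-r$, while for each nonempty proper $S$ we choose $\vp$ to keep the intermediate ranks controlled.

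The refinement comes from averaging. For a random $\vp$, \cref{lem:ercrint} applied to the coordinates outside $S$ gives $\sete\,\rank M[\vp_S]\ge(1-1/q)^{d-|S|}c$. An averaging or first-moment argument over random $\vp$ then yields some $\vp$ that simultaneously makes $\rank M(\vp)$ large and keeps $\sum_S \rank M[\vp_S]$ small. This bookkeeping should produce a recursion of the form
\[\PR(M)\le 2^d r+\PR(M')\quad\text{with}\quad \CR(M')\le \theta c,\qquad \theta<1.\]
Summing the geometric series then gives $\PR(M)\le 2^d c\cdot\rho/(1-\theta)$, where $\rho$ bounds $r/c$. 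The stated constant $\frac{2^d(q-1)^d}{(2^d-1)(q-1)^d-(2^d-2)q^d}$ strongly suggests the computation: bound $r\le c$ and the nonempty proper summands by $c-r$, and use $\sete\, r\ge\lambda c$ while charging $2^d\,\sete\, r$. Balancing these via the expectation gives the denominator $(2^d-1)\lambda-(2^d-2)$.

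The main obstacle is making this averaging argument rigorous. We need a single $\vp$ for which the progress $2^d r$ charged is proportionally small compared to the reduction in commutative rank, i.e.\ a $\vp$ with $2^d r\le K\paren*{c-\CR(M')}$ for $K$ the stated constant. The plan is to consider the random variable
\[K\paren*{c-(2^d-1)(c-\rank M(\vp))}-2^d\rank M(\vp),\]
show its expectation is nonnegative using $\sete\rank M(\vp)\ge\lambda c$, and then pick $\vp$ attaining at least the mean. The positivity of the denominator is exactly what ensures that $\CR(M')<c$ for that $\vp$, so the induction terminates.
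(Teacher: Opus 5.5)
Your final plan is correct and is essentially the paper's proof. The paper also takes one Schur-complement step at a point $\vp$ of maximal rank, uses $\PR(M-M')\le 2^d r$ and $\CR(M')\le(2^d-1)(c-r)$, and closes the induction by noting that $2^d r+C(2^d-1)(c-r)$ is weakly decreasing in $r$ and equals $Cc$ at $r=\lambda c$. This is exactly your planned computation: $X$ is affine in $r=\rank M(\vp)$ with slope $K(2^d-1)-2^d\ge 0$, and $\sete X\ge c\,[K((2^d-1)\lambda-(2^d-2))-2^d\lambda]=0$.

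So the max-rank $\vp$ you first chose already works, and you should use it. For $d=1$ one has $X\equiv 0$, so an arbitrary $\vp$ with $X(\vp)\ge\sete X$ could have $r=0$ and make no progress.

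The detour in between is unnecessary and partly wrong. Since $1-(2^d-1)(1-\lambda)=(2^d-1)\lambda-(2^d-2)$, the bound $\CR(M')\le(2^d-1)(1-\lambda)c$ that you called ``not enough'' is a strict contraction precisely under the positivity hypothesis. Averaging over the $\rank M[\vp_S]$ via \cref{lem:ercrint} could not help anyway, since it gives lower bounds on those ranks where upper bounds would be needed. Finally, a crude geometric series using $r\le c$ only yields $C/\lambda$, whereas your per-step inequality $2^d r\le K(c-\CR(M'))$ gives the stated constant.
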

One can show that $q \geq d2^d$ suffices.

\begin{proof}
If $d = 0$ the result is obvious, so assume $d \ge 1$.

Let $C=C(d,q)$ be the constant from the statement. We note that
\begin{equation}\label{eq:C-rel}
    Cq^d = 2^d(q-1)^d + C(2^d-1)(q^d - (q-1)^d).
\end{equation}
In particular, $Cq^d \geq 2^d(q-1)^d + C(q^d - (q-1)^d)$, 
which rearranges to $C \geq 2^d$.

We use induction on $k = \CR(M)$. The case $k = 0$ is trivial, so assume $k>0$. Choose some $\vp \in (\setf^n)^d$ such that $r = \rank M(\vp) = \MaxR(M)$. If $\setf$ is infinite, then $r = k$. If $\setf = \setf_q$, then by \cref{lem:ercrnew} we have $r \geq (1 - 1/q)^d k$. Let $A$ be an $r \times r$ submatrix of $M$ such that $A(\vp)$ is invertible. Then, we have
\[\PR(M) \leq \PR(M - [M/A]_\vp) + \PR([M/A]_\vp)
\le 2^d r + \PR([M/A]_\vp),\]
using \cref{lem:mappr}.
Moreover, by \cref{lem:maprank},
\[\CR([M/A]_\vp) \leq \sum_{S \subseteq [d]} (\rank M[\vp_S] - r) = \sum_{S \subsetneq [d]} (\rank M[\vp_S] - r) \leq (2^d-1)(k-r).\]
If $\setf$ is infinite, this means that $[M/A]_\vp = 0$, so we are done. In the finite case, we have
\[(2^d - 1)(k-r) \leq (2^d - 1) \cdot \frac{q^d - (q-1)^d}{q^d} \cdot k\]
which is less than $k$ if $(2^d-1)(q-1)^d > (2^d-2)q^d$. We use the inductive hypothesis on $[M/A]_\vp$ to get
\[\PR(M) \leq 2^d r + C (2^d - 1)(k-r).\]
The right hand side is (weakly) monotone decreasing in $r$; indeed, the coefficient of $r$ is $2^d - C(2^d-1) \le 2^d - 2^d(2^d-1) \leq 0$.
Since $r/k \ge (1-1/q)^d$, we have
\[\frac{\PR(M)}{\CR(M)} \le \frac{2^d(q-1)^d + C (2^d - 1)(q^d - (q-1)^d)}{q^d} = C,\]
using \labelcref{eq:C-rel}. This completes the proof.
\end{proof}

\subsection{Very small fields}
\cref{lem:ercrnew} proves the first half of \cref{thm:main}, while \cref{lem:prcrl} shows the second half of \cref{thm:main} for sufficiently large fields. To handle the remaining cases, we pass to a field extension.

Suppose $\setf = \setf_q$ is a finite field and let $\setk = \setf_{q^k}$ be the degree-$k$ extension of $\setf$. Let $\calm_d^\setf$ and $\calm_d^\setk$ be $\calm_d$ considered over $\setf$ and $\setk$, respectively. Given an $\calm_d^\setf$-matrix $M^\setf$, it can also be considered as an $\calm_d^\setk$-matrix, which we call $M^\setk$.

Since commutative rank can be described in terms of the vanishing of minors, which doesn't depend on the base field, we have $\CR(M^\setk) = \CR(M^\setf)$. The following lemma is a variant of \cite[Proof of Cor.~1]{CM23}.

\begin{lem} \label{lem:ext}
$\PR(M^\setf) \leq k \PR(M^\setk)$.
\end{lem}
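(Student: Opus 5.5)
We will take a decomposition of $M^\setk$ into $R = \PR(M^\setk)$ outer products over $\setk$ and apply an $\setf$-linear map $\setk \to \setf$ entrywise. Each $\setk$-summand will then break into at most $k$ $\setf$-summands.

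Fix an $\setf$-basis $\omega_1,\ldots,\omega_k$ of $\setk$, with $\omega_1 = 1$. Let $\pi\colon \setk \to \setf$ be the $\setf$-linear coordinate map extracting the $\omega_1$-coefficient. Extend $\pi$ coefficientwise to a map $\calm_d^\setk \to \calm_d^\setf$, and then entrywise to matrices. This map is $\setf$-linear and fixes every element already defined over $\setf$, so $\pi(M^\setk) = M^\setf$.

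Now write
\[M^\setk = \sum_{j=1}^{R} u_j \otimes v_j,\]
where $u_j$ is an $\calm_{S_j}^\setk$-vector and $v_j$ is an $\calm_{[d]\setminus S_j}^\setk$-vector. Expand each vector in the basis:
\[u_j = \sum_{a=1}^k \omega_a u_{j,a}, \qquad v_j = \sum_{b=1}^k \omega_b v_{j,b},\]
where $u_{j,a}$ and $v_{j,b}$ are vectors over $\setf$ with the same multilinearity type. Then
\[u_j \otimes v_j = \sum_{a,b} \omega_a\omega_b\, u_{j,a} \otimes v_{j,b}.\]
Applying $\pi$ gives
\[\pi(u_j \otimes v_j) = \sum_{a,b} \pi(\omega_a\omega_b)\, u_{j,a} \otimes v_{j,b} = \sum_{a=1}^k u_{j,a} \otimes w_{j,a}, \qquad w_{j,a} \coloneq \sum_b \pi(\omega_a\omega_b) v_{j,b}.\]
Each $w_{j,a}$ is an $\calm_{[d]\setminus S_j}^\setf$-vector, so $\pi(u_j\otimes v_j)$ is a sum of at most $k$ outer products over $\setf$, all with the same partition $S_j$. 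Summing over $j$ gives $\PR(M^\setf) \le kR$.

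The main point to check is that $\pi(fg) = \sum_{a,b}\pi(\omega_a\omega_b) f_a g_b$ when $f$ and $g$ lie in variable-disjoint multilinear spaces. This is exactly where $\setf$-bilinearity in the coordinate expansion is used, and it follows directly by expanding the coefficients. No other step presents an obstacle.
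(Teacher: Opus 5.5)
Your proposal is correct and is essentially the paper's proof. The paper likewise applies an $\setf$-linear projection $\varphi\colon\setk\to\setf$ entrywise and, for each summand $u\otimes v$, writes $\varphi(u\otimes v)=\sum_i u_i\otimes\varphi(\beta_i v)$; your $w_{j,a}=\sum_b\pi(\omega_a\omega_b)v_{j,b}$ is exactly $\pi(\omega_a v_j)$, and your choice $\omega_1=1$ makes explicit the requirement, implicit in the word ``projection,'' that the map fixes $\setf$ so that $\pi(M^\setk)=M^\setf$.
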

\begin{proof}
Let $\beta_1,\ldots,\beta_k$ be an $\setf$-basis of $\setk$ and pick an arbitrary $\setf$-linear projection $\varphi \colon \setk \to \setf$. We claim that for any $\calm_d^\setk$-matrix $M$, we have $\PR(\varphi(M)) \leq k\PR(M)$. To show this it suffices to consider $M$ of partition-rank one, i.e.\ $M$ of the form $u \otimes v$ for an $\calm^\setk_S$-vector $u$ and an $\calm^\setk_{[d]\setminus S}$-vector $v$. Writing $u = \sum_{i \in [k]} u_i \beta_i$ for $\calm^\setf_S$-vectors $u_i$, we therefore have
\[\varphi(M) = \sum_{i \in [k]} \varphi(u_i \otimes \beta_i v) = \sum_{i\in [k]} u_i \otimes \varphi(\beta_i v),\]
which indeed has partition rank at most $k$.
\end{proof}

\begin{proof}[Proof of \cref{thm:main}]
In light of \cref{lem:ercrnew,lem:prcrl}, it suffices to show that for any fixed finite field $\setf$, we have $\PR(M) \lsim_d \CR(M)$, where $\lsim_d$ denotes an inequality up to some constant depending on $d$.

Choose some $k$ such that \cref{lem:prcrl} holds for $\setk$ as above. Then, by \cref{lem:ext} we have
\[\PR(M^\setf) \lsim_d \PR(M^\setk) \lsim_d \CR(M^\setk) = \CR(M^\setf),\]
as desired.
\end{proof}

\section{Degree-1 Applications} \label{sec:d1}
\subsection{Partition rank and commutative rank}
\cref{thm:d1bound} is precisely the $d = 1$ case of \cref{lem:prcrl}. We give here an independent, shorter proof. Before we do so, we recall that, as mentioned in the introduction, the partition rank of $\calm_1$-matrices has the following natural interpretation:
\begin{fact}
For an $\calm_1$-matrix $M$, $\PR(M) \leq r$ if and only if there exist
invertible scalar matrices $P$ and $Q$ such that
\[PMQ = \begin{pmatrix}
M_{11} & M_{12} \\
M_{21} & 0
\end{pmatrix},\]
with $M_{11}$ of size $r_1 \times r_2$ with $r_1+r_2 \le r$.
\end{fact}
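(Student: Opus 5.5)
Both directions are elementary linear algebra. Write $M = \sum_{j=1}^n x_{1,j} M_j$ with scalar matrices $M_j$, and view $M$ as an $\setf$-bilinear map $\setf^a \times \setf^b \to \calm_1$.

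\emph{($\Leftarrow$).} Suppose $PMQ$ has the stated block form with $M_{11}$ of size $r_1\times r_2$ and $r_1+r_2\le r$. Since $P$ and $Q$ are invertible scalar matrices, $\PR(M)=\PR(PMQ)$: an outer product $u\otimes v$ is sent to $(Pu)\otimes(Q^t v)$, which is again an outer product. So it suffices to bound $\PR(PMQ)$. Split $PMQ$ into its first $r_1$ rows and the remaining rows.

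The first $r_1$ rows form $\sum_{i\le r_1} e_i \otimes w_i$, where $w_i$ is the $i$th row. This is a sum of $r_1$ outer products with $S=\emptyset$ on the scalar side: $e_i$ is a scalar vector, i.e.\ an $\calm_\emptyset$-vector, and $w_i$ is an $\calm_1$-vector.

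The remaining rows are nonzero only in the first $r_2$ columns, namely the block $M_{21}$. This part is $\sum_{j\le r_2} c_j\otimes e_j$, where $c_j$ is the $j$th column of $\begin{pmatrix}0\\ M_{21}\end{pmatrix}$. That gives $r_2$ more outer products, so $\PR(M)\le r_1+r_2\le r$.

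\emph{($\Rightarrow$).} Suppose $M=\sum_{k=1}^{r} u_k\otimes v_k$. For $d=1$ each $S\subseteq[1]$ is either $\emptyset$ or $\{1\}$. So each term has either $u_k$ a scalar vector and $v_k$ an $\calm_1$-vector, or $u_k$ an $\calm_1$-vector and $v_k$ a scalar vector. Let $r_1$ be the number of terms of the first kind and $r_2$ the number of the second, so $r_1+r_2=r$.

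Let $U\subseteq\setf^a$ be the span of the scalar vectors $u_k$, and $V\subseteq\setf^b$ the span of the scalar vectors $v_k$. Then $\dim U\le r_1$ and $\dim V\le r_2$.

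Take any $x\in U^\perp$ and $y\in V^\perp$, meaning $x^t u_k=0$ for every scalar $u_k$ and $v_k^t y=0$ for every scalar $v_k$. Every term then dies in $x^tMy$:
\[
x^t M y=\sum_{k\le r_1} (x^t u_k)(v_k^t y) + \sum_{k> r_1} (x^t u_k)(v_k^t y)=0,
\]
since in the first sum $x^t u_k=0$ and in the second $v_k^t y=0$. Hence $M$ vanishes on $U^\perp\times V^\perp$.

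Now choose an invertible $P$ whose last $a-\dim U$ rows form a basis of $U^\perp$, and an invertible $Q$ whose last $b-\dim V$ columns form a basis of $V^\perp$. With these choices the bottom-right block of $PMQ$ is zero, and $M_{11}$ has size $\dim U\times\dim V$. This gives the required form with $\dim U+\dim V\le r$.

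The only point needing care is the coordinate-free reformulation of the second direction, that $M$ vanishes on $U^\perp\times V^\perp$. After that, turning the annihilators into the block form via $P$ and $Q$ is routine, so no step should be a real obstacle.
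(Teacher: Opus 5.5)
Your proof is correct and is essentially the paper's argument. Choosing $P$ whose last rows span $U^\perp$ (and $Q$ whose last columns span $V^\perp$) is the same as the paper's change of basis, which places all scalar $u_i$ in the span of the first $r_1$ basis vectors and all scalar $v'_i$ in the span of the first $r_2$, so that the bottom-right block vanishes. Your treatment of the converse direction spells out what the paper leaves implicit when it says the decomposition ``corresponds exactly'' to the block form.
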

The reason for this is that given any way to write
\[M(x_1) = \sum_{i=1}^{r_1} u_i \otimes v_i(x_1) + \sum_{i=1}^{r_2} u'_i(x_1) \otimes v'_i\]
for scalar vectors $u_i$ and $v'_i$ and $\calm_1$-vectors $v_i$ and $u'_i$, we may change basis such that all the $u_i$ are in the subspace of $\setf^n$ generated by the first $r_1$ basis elements and all the $v_i$ are in the subspace of $\setf^n$ generate by the first $r_2$ basis elements. This corresponds exactly to the above block decomposition of $PMQ$.
\begin{proof}[Proof of \cref{thm:d1bound}]
Write $\alpha_i = x_{1,i}$. We use induction on $k = \CR(M)$, with the $k = 0$ case being trivial.

If $k > 0$, pick some $p$ such that $M(p)$ is nonzero and let $r = \rank M(p)$. Without loss of generality assume that $p = e_1$. If we change basis so that 
\[M(p) = \begin{pmatrix}
I & 0 \\ 0 & 0
\end{pmatrix},\]
we have
\[M = \begin{pmatrix}
\alpha_1 I + A & B \\
C & D
\end{pmatrix},\]
where $A$, $B$, $C$, and $D$ are $\calm_1$-matrices that do not depend on $\alpha_1$. We claim that $\CR(D) \leq k - r$, which will finish since we will have
\[\PR(M) \leq \PR\paren*{\begin{pmatrix}
\alpha_1 I + A & B \\
C & 0
\end{pmatrix}} + \PR(D) \leq 2r + 2(k-r) = 2k,\]
by the inductive hypothesis on $D$.

If $\CR(D) \ge k-r+1$, we may delete rows and columns of $M$ to yield a matrix
\[M' = \begin{pmatrix}
\alpha_1 I + A & B' \\
C' & D'
\end{pmatrix}\]
such that $D'$ is $(k-r+1) \times (k-r+1)$ and $\det D' \neq 0$. We claim that $\det M' \neq 0$, which will be a contradiction as it implies $\CR(M) \geq \CR(M') \ge k+1$.

The determinant of $M'$ is a polynomial in $\alpha_1,\ldots,\alpha_n$; we claim that the $\alpha_1^r$-coefficient, as a polynomial in $\alpha_2,\ldots,\alpha_n$, is exactly $\det D'$. Indeed, given any term in the expansion of $\det M'$, the only way for it to have degree at least $r$ in $\alpha_1$ is if it involves every nonzero entry in $\alpha_1 I$. The sum of all these terms yields $\alpha_1^r \det D'$ plus terms of lower degree in $\alpha_1$, proving the claim. Thus $\det M' \neq 0$, as desired.
\end{proof}
\subsection{\texorpdfstring{$3$}{3}-tensors}
In this section, we assume $\setf = \setf_q$.
We use our machinery to prove~\cref{cor:arsr}, giving the best bounds known for $\sup_T \SR(T)/\AR(T)$, where $T$ ranges over all $T \in \calm_3(\setf)$.

For a $3$-tensor $T \in \calm_3$,  
recall that $\mat{T}$ denotes the corresponding $\calm_1$-matrix,
which is the $n \times n$ matrix whose entries $m_{j,k}$ are linear forms given by $T(x,y,z) = \sum_{j,k=1}^n m_{j,k}(x)y_jz_k$.
We start with a lemma relating the analytic rank of a $3$-tensor $T$ to the average rank of $\mat{T}$ at random points in some (large) subspace.
It is a small adaptation of a lemma in~\cite{MZ22} (see also~\cite{Lam25}).

\begin{lem}[]\label{lem:ar-avg}
For any $3$-tensor $T$ and $a \ge 0$, 
there exists a subspace $U \subseteq \setf^n$ such that
\[a\sete_{x \in U}[\rank \mat{T}(x)] + \codim U \leq (a+1)\AR(T).\]
\end{lem}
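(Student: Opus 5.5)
The plan is to work entirely through the bias. First I record the standard identity: for a fixed $x$, the bilinear form $(y,z)\mapsto y^t\mat{T}(x)z$ vanishes identically in $z$ for a $q^{-\rank \mat{T}(x)}$ fraction of $y$. Therefore
\[\bias(T)=\Pr_{x,y}[T(x,y,\cdot)=0]=\sete_x\, q^{-s(x)},\qquad s(x)\coloneq\rank\mat{T}(x).\]
By symmetry,
\[\bias(T)=\sete_y\, q^{-\codim U_y},\qquad U_y\coloneq\set{x : T(x,y,\cdot)=0}.\]
Each $U_y$ is a subspace. This suggests building $U$ as an intersection $U_Y=\bigcap_{y\in Y}U_y$ over a random set $Y$ of "left" vectors. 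For such $x\in U_Y$ the matrix $\mat{T}(x)$ has all of $Y$ in its left kernel.

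The key computation: let $y_1,\dots,y_m$ be independent uniform vectors and $U=U_{\set{y_1,\dots,y_m}}$. For a fixed $x$, $\Pr_Y[x\in U]=q^{-m s(x)}$. Hence
\[\sete_Y\, q^{-\codim U}=\sete_x\, q^{-ms(x)},\qquad \sete_Y\paren*{q^{-\codim U}\sete_{x\in U}s(x)}=\sete_x\, s(x)q^{-ms(x)}.\]
So the $U$-weighted average of $s$ is a tilted average of $s$ under the measure proportional to $q^{-ms(x)}$. I would then take $m=a$ (or, for non-integer $a$, a fractional interpolation or a mixture of two integer values). I would also replace the sample by the subspace $U$ chosen to maximize a suitable potential. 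The candidate potential is
\[\Phi(U)=q^{-\codim U}\cdot q^{-a\sete_{x\in U}s(x)},\]
and the goal is to show that some $U$ satisfies $\Phi(U)\ge \bias(T)^{a+1}$. Taking $-\log_q$ of that inequality is exactly the claimed bound $a\sete_{x\in U}s(x)+\codim U\le(a+1)\AR(T)$.

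To get $\Phi(U)\ge\bias^{a+1}$, I would use convexity in the form $q^{-a\sete_U s}\ge$ something, combined with Hölder. Jensen gives $q^{-a\sete_U s}\le\sete_U q^{-as}$, which is the wrong direction. So instead I will choose $U$ to \emph{optimize} a potential among all subspaces and use local optimality. Passing from $U$ to a subspace $U\cap U_y$ increases the codimension by $\codim_U(U\cap U_y)$. On average over $y$ this costs exactly $-\log_q\sete_{x\in U}q^{-s(x)}$ in the bias. If $\sete_U s$ were much larger than $-\log_q\sete_U q^{-s}$, a further restriction would strictly improve the potential. Following the argument of \cite{MZ22} (and \cite{Lam25}), I would take $U$ minimizing $\codim U+(a+1)\log_q\paren[\big]{1/\sete_{x\in U}q^{-s(x)}}\cdot c$ for an appropriate constant $c$, and compare with the restrictions $U\cap U_y$. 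This should yield $a\sete_U s\le (a+1)(-\log_q\sete_U q^{-s})-\ldots$, and it chains back to $\AR(T)$ at $U=\setf^n$.

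The main obstacle is this last step: getting a linear (rather than Jensen-type, exponential) control of $\sete_{x\in U}s(x)$. I expect to need the subadditivity $s(x+x')\le s(x)+s(x')$ to argue that at an optimal $U$ the rank is nearly concentrated. Concretely, the distribution of $s$ on $U$ cannot have a heavy low-rank part, since otherwise intersecting with more $U_y$'s would increase the potential. I would first try to make this work by pure averaging over $y$ and minimization over subspaces. Only if that fails would I bring in subadditivity explicitly.
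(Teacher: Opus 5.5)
There is a genuine gap. Your identities are correct: $\bias(T)=\sete_x q^{-s(x)}=\sete_y q^{-\codim U_y}$, and the tilted averages for $U=\bigcap_i U_{y_i}$ hold. But the proposal never proves the lemma. The decisive step is producing a $U$ with $\Phi(U)\ge\bias(T)^{a+1}$, and you leave it as an admitted obstacle. In its place you offer only a speculative plan: minimize a potential over subspaces, use local optimality, possibly invoke subadditivity of rank. That plan is not carried out, and nothing in it explains where the constant $a+1$ would come from. The detours (taking $m=a$ intersections, the potential function) come from lower-bounding $\Phi(U)$ for a fixed $U$, which is exactly where Jensen points the wrong way.

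The missing idea is to average the \emph{exponent} $a\,\sete_{x\in U_y}s(x)+\codim U_y$ over a single $y$ chosen with size-biased weights. Concretely, sample $(x,y)$ uniformly from $Z=\{(x,y): x\in U_y\}$. Under this measure $x$ has law proportional to $q^{-s(x)}$ and $y$ has law proportional to $q^{-\codim U_y}$, so
\[\sete_{(x,y)\in Z} q^{s(x)}=\sete_{(x,y)\in Z} q^{\codim U_y}=\frac{q^{2n}}{|Z|}=\frac{1}{\bias(T)}=q^{\AR(T)}.\]
Jensen for the convex function $t\mapsto q^t$ now goes in the right direction. It gives $\sete_Z s(x)\le\AR(T)$ and $\sete_Z\codim U_y\le\AR(T)$, so by linearity $\sete_Z[a\,s(x)+\codim U_y]\le(a+1)\AR(T)$. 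Conditionally on $y$, the point $x$ is uniform on $U_y$. Hence some $y_0$ satisfies $a\,\sete_{x\in U_{y_0}}s(x)+\codim U_{y_0}\le(a+1)\AR(T)$, and $U=U_{y_0}$ works.

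Your own $m=1$ identity already contains this. Dividing $\sete_y(q^{-\codim U_y}\sete_{x\in U_y}s(x))=\sete_x s(x)q^{-s(x)}$ by $\bias(T)$ gives exactly $\sete_Z s(x)$. So no optimization over subspaces and no subadditivity are needed. Nor are intersections of several $U_y$: the parameter $a$ enters only as a weight in the linear combination, not as a number of intersections.
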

\begin{proof}
    Let $B \colon (\setf^n)^2 \to \setf^n$ be the bilinear map corresponding to 
    $T$, namely, $T(x,y,z) = \sum_{k=1}^n B_k(x,y)z_k$.
    Let $Z = B^{-1}(0) \subseteq (\setf^n)^2$ be the zero set of $B$.
    We first show that
    \[\sete_{(x,y)\in Z} [\rank B(x,-)] \le \AR(T).\]
    Observe that $\Pr_{(x,y) \in Z}[x=a] = q^{n-\rank B(a,-)}/|Z|$.
    Therefore,
    \[\sete_{(x,y) \in Z} q^{\rank B(x,-)} 
    = \sum_{a \in \setf^n} \Pr_{(x,y) \in Z}[x=a] q^{\rank B(a,-)} 
    = \sum_{a \in \setf^n} q^n/|Z| = q^{2n}/|Z| = 1/\bias(T) = q^{\AR(T)}.\]
    To finish, apply Jensen's inequality on the convex function $z \mapsto q^z$.

    Note that we similarly have 
    $\sete_{(x,y)\in Z} \rank B(-,y) \le \AR(T)$.
    Now, by linearity of expectation, 
    \[\sete_{(x,y)\in Z} [a\cdot\rank B(x,-) + \rank B(-,y)] 
    \le (a+1)\AR(T).\]
    Observe that $(x,y) \in Z$ if and only $x \in \ker B(-,y)$.
    Thus, there exists $y_0 \in \setf^n$ such that the subspace $U=\ker B(-,y_0)$ satisfies
    \[\sete_{x \in U}[a\cdot\rank B(x,-)] + \rank B(-,y_0) 
    \le (a+1)\AR(T).\]
    The proof now follows by noting that $\rank B(x,-) = \rank \mat{T}(x)$ and $\rank B(-,y_0) = \codim U$.
\end{proof}

\begin{proof}[Proof of~\cref{cor:arsr}]
    For any $\calm_1$-matrix $M$,
    \[\PR(M) \le 2\CR(M) \le \frac{2}{1-1/q}\sete_x[\rank M(x)]\]
    where the first inequality is~\cref{thm:d1bound},
    and the second inequality is~\cref{lem:ercrint}.

    Recall that $\SR(T) = \PR(T)  \le \PR(\mat{T})$.
    Apply~\cref{lem:ar-avg} with $T$ and $a=2/(1-1/q)$,
    and let $U \subseteq \setf^n$ be the resulting subspace.
    Then
    \begin{multline*}
        \SR(T) \le \SR(T\rvert_{U\times \setf^n \times \setf^n}) + \codim U
        \le \PR(\mat{T}\rvert_U) + \codim U
        \\ \le a\sete_{x \in U}[\rank \mat{T}(x)] + \codim U \le (a+1)\AR(T).
    \end{multline*}
    To finish the proof, note that $a+1 = \frac{3q-1}{q-1} = 3 + \frac{2}{q-1}$, as needed.
\end{proof}

\section{Relation to the Analytic Rank-Partition Rank Problem} \label{sec:arpr}
In this section, we assume $\setf = \setf_q$.

A major question in the study of tensor ranks is the following conjecture:
\begin{conj}[\cite{Lov19,KZ21,AKZ21}] \label{conj:arpr}
For any $d \geq 2$, there is a constant $C_d$ such that for any $d$-tensor $T$ over any finite field, $\PR(T) \leq C_d \AR(T)$.
\end{conj}
As it is not difficult to show that $\AR(T) \leq \PR(T)$, this conjecture is often described as the equivalence between the analytic and partition ranks. The case $d=2$ is trivial, as they are the same, while the case $d=3$ is resolved by \cref{thm:arsr}, which we improve slightly in \cref{cor:arsr}. In the case $d \geq 4$, the best known bound is due to previous work of the authors:
\begin{thm}[\cite{MZ22}] \label{thm:arprl}
$\PR(T) \leq C_d \AR(T) \cdot (1 + \log_{q}(1 + \AR(T)))$.
\end{thm}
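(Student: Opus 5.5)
The plan is an induction on $d$, with the case $d=2$ trivial (there $\AR=\PR=\rank$) and the case $d=3$ covered by \cref{thm:arsr} or \cref{cor:arsr}. For $d\ge 4$ I would reduce to $(d-1)$-tensors by slicing in the first coordinate. Write $T[x_1]\in\calm_{\set{2,\ldots,d}}$ for the slice at $x_1\in\setf^n$. The bias has the standard averaging identity
\[\bias(T)=\sete_{x_1}\bias(T[x_1]),\qquad\text{that is,}\qquad q^{-\AR(T)}=\sete_{x_1} q^{-\AR(T[x_1])}.\]
This shows that the set $G=\set{x_1 : \AR(T[x_1])\le \AR(T)+s}$ is large. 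Markov's inequality gives $|G|\ge (1-q^{-s})q^{n-\AR(T)}$ for $s\approx 1$, and more refined counting gives a comparable bound. By the inductive hypothesis, every $x_1\in G$ gives a slice with $\PR(T[x_1])\lsim_d (\AR(T)+s)(1+\log_q(1+\AR(T)))$.

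Next I would convert ``many good slices'' into a decomposition of $T$ itself, in the spirit of the approximation map $[-]_\vp$ of \cref{sec:approx}. A set of density $q^{-\AR(T)-O(1)}$ cannot avoid a subspace of codimension $O(\AR(T))$ too badly. Concretely, I would pick $m=O(\AR(T)+\log_q(1+\AR(T)))$ random points of $G$. With good probability their linear combinations cover a positive fraction of some subspace $U$ with $\codim U\lsim \AR(T)$. Restricting $T$ to $U$ in coordinate $1$ costs $\codim U$ in partition rank. On $U$, each slice lies in the span of the slices at the chosen points, up to the low-partition-rank error furnished by induction. I would then glue these slice decompositions into a single decomposition. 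To do this, I would take the $\calm_{\set{2,\ldots,d}}$-factors from each slice and multiply them by linear forms in $x_1$ dual to the chosen points. This is the same mechanism as \cref{lem:mppr}, where each outer product costs only a constant factor.

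The logarithmic factor will enter at exactly one place. The random points must span a subspace on which all but a negligible fraction of slices are good. A union bound over the directions of $U$ forces $m$ to exceed $\AR(T)$ by an additive $O(\log_q(1+\AR(T)))$. I also expect the recursion to multiply this loss by a constant per level only, which yields $C_d$ rather than an iterated logarithm. I would keep the log from compounding by tracking the partition rank of the slice decompositions jointly rather than slice by slice.

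I expect the main obstacle to be the gluing step. The decompositions of the individual slices $T[x_1]$ are not linear in $x_1$, so naively summing them over a basis does not reconstruct $T$. This is the same issue as the loss of multilinearity described in the proof overview. I would first try to handle it by choosing the decompositions for the basis points and then defining the decomposition at a general $x_1\in U$ by linear extension. The defect of this extension is a tensor whose slices all vanish on the sampled set. A multiplicity argument in the style of \cref{lem:multSZ} should then show that this defect has small analytic rank, so it can be absorbed by one more application of the inductive hypothesis, at the cost of a constant factor.
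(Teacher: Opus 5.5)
\textbf{There is a genuine gap: the gluing step fails for a counting reason that no error estimate can repair.}

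First, the $m$ points you sample span a subspace with at most $q^m$ elements, while $U$ has $q^{n-O(\AR(T))}$ elements. Since $n$ is unbounded, their linear combinations cannot cover a positive fraction of $U$. So your claim that every slice on $U$ lies in the span of the sampled slices, up to a small error, is false.

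Second, linear extension produces no approximation error for a multiplicity argument to control. Take a basis $u_1,\ldots,u_{\dim U}$ of $U$ with dual forms $\ell_j$. Then $T[x_1]=\sum_j \ell_j(x_1)\,T[u_j]$ holds exactly on $U$. Each term $\ell_j(x_1)\,T[u_j]$ is already a single reducible tensor, so the slice decompositions from your induction play no role, and you only recover the trivial bound $\PR(T|_U)\le\dim U$.

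If you use just $m$ points $u_1,\ldots,u_m$, the ``defect'' is $T$ with $x_1$ projected onto a complement $W$ of their span, and $\PR(T|_U)\le\PR(T|_W)+m$. So the defect carries essentially all of the partition rank. It is still a $d$-tensor, so your induction on $d$ does not apply to it. Its analytic rank satisfies $\AR(T|_W)\le\AR(T)$ but need not drop (e.g.\ if the sampled points lie in $\{x_1 : T[x_1]=0\}$), so no induction on $\AR$ absorbs it either. Since the defect is linear in $x_1$, vanishing at the sampled points only means vanishing on their span. \cref{lem:multSZ} relies on high-order vanishing of bounded-degree polynomials, so it gives nothing here.

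A further warning sign: in your accounting the logarithm enters only additively on top of a linear base case. If the argument worked it would give $\PR(T)\lsim_d\AR(T)$, i.e.\ \cref{conj:arpr}, which the paper describes as open.

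\textbf{The missing idea is coherence.} Knowing that $\PR(T[x_1])$ is small for each good $x_1$ separately says nothing about how the decompositions at different $x_1$ fit together. Already for $d=3$, passing from ``every slice has rank at most $r$'' to $\PR(T)\lsim r$ is a genuine theorem (\cref{lem:ercrnew} together with \cref{thm:d1bound}), not a sum over slices. Moreover, by $\bias(T)=\sete_{x_1}\bias(T[x_1])$, your slicing step essentially restates the hypothesis.

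The paper's proof, cited from \cite{MZ22}, builds coherence in from the start:
\begin{enumerate}
\item It shows that low analytic rank forces low \emph{local rank}, of which $\CR(\mat{T})$ is a special case (compare \cref{lem:ercrnew}).
\item It decomposes tensors of low local rank all at once by explicit formulas. In the matrix case these give the same decomposition as the one-step Schur complement $[M/A]_\vp$ at a single well-chosen point (\cref{prop:prcrfull}).
\end{enumerate}
That single-point linearization is where a constant-factor loss as in \cref{lem:mppr} legitimately enters, not in combining slices taken at different points. The point must satisfy a stability condition (LR-stability) that is only guaranteed over fields of size polynomial in $\AR(T)$. The factor $1+\log_q(1+\AR(T))$ is the price of handling smaller fields.
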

In particular, this means that \cref{conj:arpr} holds for all fields with size at least polynomial in $\AR(T)$, so the log factor is only relevant for small fields.

Perhaps surprisingly, this logarithmic term has proven to be rather stubborn, as it appears to represent a fundamental gap in our knowledge of how pathologically polynomials can behave over small finite fields. Moreover, this gap appears to be specific to the partition rank, as the analytic rank over small fields has been connected to many other notions \cite{CY25,BL24}.

From this perspective, \cref{thm:main} appears to be the first result that breaks through this logarithmic barrier, establishing linear bounds on the partition rank over small finite fields, albeit in a special case. Indeed, we have $\AR(T) \leq \MaxR(\mat{T})$, meaning that matrices of low max-rank are a special case of tensors of low analytic rank. Specifically, given $T \in \calm_d$ and $\vp_{[d-2]} \in (\setf^n)^{d-2}$, the probability that a uniformly random $p_{d-1} \in \setf^n$ satisfies $T[\vp_{[d-1]}] = 0$ is exactly $q^{-\rank(\mat{T}[\vp_{[d-2]}])} \geq q^{-\MaxR(\mat{T})}$.  Since $\PR(T) \leq \PR(\mat{T})$, \cref{thm:main} shows \cref{conj:arpr} for tensors that have low analytic rank because their corresponding matrices have low max-rank.

Moreover, the techniques developed in this paper are highly connected to those in \cite{MZ22}. In \cite{MZ22}, we define the \vocab{local rank} of a tensor, and the proof of \cref{thm:arprl} proceeds by first proving that tensors of low analytic rank have low local rank, and then showing that all tensors of low local rank have low partition rank. The commutative rank of a multilinear matrix is a special case of local rank. (In the notation of \cite{MZ22}, we have $\LR_{0}(T) = (\CR(\mat{T}),0,\ldots,0)$; see \cite[Ex.~5.5]{MZ22}.)

If one were to apply the techniques of \cite[Sec.~7]{MZ22}, one would get the following:
\begin{prop}[Essentially a special case of {\cite[Thm.~1.5]{MZ22}}] \label{prop:prcrfull}
If $M$ is an $\calm_d$-matrix and $\CR(M) = \MaxR(M) = r$, then $\PR(M) \leq 2^dr$.
\end{prop}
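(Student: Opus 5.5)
We will prove the statement using the differential Schur complement of \cref{sec:sc}. In this situation it gives the bound in a single step, with no induction. Fix $\vp \in (\setf^n)^d$ with $\rank M(\vp) = \MaxR(M) = r$. Choose an $r \times r$ submatrix $A$ of $M$ such that $A(\vp)$ is invertible. Then $\det A$ is a homogeneous element of $\calr_d$ that does not vanish at $\vp$, so $A$ is invertible over $\calr_{d,\vp}$, and the differential Schur complement $[M/A]_\vp$ of \cref{def:dsc} is defined. Splitting
\[M = (M - [M/A]_\vp) + [M/A]_\vp,\]
\cref{lem:mappr} gives $\PR(M - [M/A]_\vp) \le 2^d r$. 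It therefore suffices to show $[M/A]_\vp = 0$.

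For this we use \cref{lem:maprank}:
\[\rank([M/A]_\vp) \le \sum_{S \subseteq [d]} \paren*{\rank M[\vp_S] - r}.\]
We bound each summand. For $S = [d]$ the partial evaluation is $M(\vp)$, whose rank is $r$, so the summand is zero. For $S \subsetneq [d]$, $M[\vp_S]$ is a matrix over the polynomial ring in the remaining variables. Its rank over the fraction field is at most $\CR(M) = r$, because every minor of $M[\vp_S]$ is a specialization of the corresponding minor of $M$, and all $(r+1)\times(r+1)$ minors of $M$ are zero polynomials. It is also at least $r$, since $A[\vp_S]$ specializes further to the invertible matrix $A(\vp)$. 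So every summand vanishes. Hence $\CR([M/A]_\vp) = 0$, which forces $[M/A]_\vp = 0$ as a matrix of polynomials, and we conclude $\PR(M) \le 2^d r$.

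Given the results already in the paper, no step here is a real obstacle. The only point needing care is the base field. We need the point $\vp$ with $\rank M(\vp) = r$ to have coordinates in $\setf$: the approximation map $[-]_\vp$ must produce $\calm_d$-matrices over $\setf$, so that the partition rank decomposition is over $\setf$. The hypothesis $\MaxR(M) = r$ guarantees exactly such an $\setf$-rational point exists, so the argument goes through over an arbitrary field. This is the same mechanism as the infinite-field case of \cref{lem:prcrl}, with the hypothesis $\CR = \MaxR$ replacing infiniteness of $\setf$.
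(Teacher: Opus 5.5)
Your proposal is correct and is essentially the paper's argument: pick an $\setf$-rational $\vp$ with $\rank M(\vp)=\MaxR(M)=r$ and an $r\times r$ submatrix $A$ with $A(\vp)$ invertible. Then observe $[M/A]_\vp=0$ and conclude $\PR(M)=\PR(M-[M/A]_\vp)\le 2^d r$ by \cref{lem:mappr}. The paper leaves the vanishing of $[M/A]_\vp$ implicit (it is the same mechanism as the infinite-field case of \cref{lem:prcrl}), and your check via \cref{lem:maprank} that $\rank M[\vp_S]=r$ for every $S$ is exactly the intended justification.
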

However, this also follows from our theory of Schur complementation, since if $\CR(M) = \MaxR(M) = r$, we may find an $r \times r$ submatrix and some $\vp$ such that $A[\vp]$ is invertible. Taking the Schur complement with respect to this $A$ and $\vp$ yields that $[M/A]_\vp = 0$ and thus $\PR(M) = \PR(M - [M/A]_\vp) \leq 2^dr$. In fact, though they look very different, the arguments in \cite{MZ22} and this paper for this case are actually the same (in particular, they result in the same partition rank decomposition).

The main conceptual advance in this paper is the idea of decomposing a low-rank matrix/tensor in multiple steps, instead of all at once. The latter approach works over large fields, but relies on conditions (which we call \vocab{LR-stability} in \cite{MZ22}) that are too good to hope for over small fields. To address this, we instead decompose most of the matrix/tensor, and bound the rank of the remainder term. If this remainder has lower rank than the original tensor, iterating this process will eventually lead to a full decomposition.

This line of thought leads to a possible method for proving \cref{conj:arpr}. In \cite{MZ22}, we prove a vast generalization of \cref{prop:prcrfull} by constructing formulas that decompose all tensors of low local rank, under specific conditions. A natural question to ask is whether we can also use these formulas to decompose most of a tensor instead of all of it. In fact, this is possible, and leads to the notion of a ``multi-level Schur complement''.

The reason why this doesn't immediately prove \cref{conj:arpr} is the lack of an analogue of \cref{lem:ercrnew}.
In the proof of \cref{thm:main}, \cref{lem:ercrnew}
showed that it was always possible to take the Schur component with respect to a submatrix almost as large as the commutative rank, which was necessary to prevent the rank of the Schur complement from increasing uncontrollably. However, local rank is determined by polynomials of much higher degree, and as a result, it is impossible to show, via degree considerations alone, that almost all of the tensor can be decomposed in a single step. Developing an alternative approach to guarantee almost-total decomposition is the subject of ongoing work.
\printbibliography
\end{document}